\newtheorem{thm}{Theorem}[section]
\newtheorem{prop}[thm]{Proposition}
\newtheorem{corollary}[thm]{Corollary}
\newtheorem{lem}[thm]{Lemma}
\theoremstyle{definition}
\newtheorem{example}[thm]{Example}
\newtheorem{definition}[thm]{Definition}
\newtheorem{remark}[thm]{Remark}
\newtheorem*{ack}{Acknowledgements}
\DeclareMathOperator{\im}{im}
\DeclareMathOperator{\facets}{Facets}
\DeclareMathOperator{\Z}{\mathbb Z}
\DeclareMathOperator{\N}{\mathcal{N}}
\DeclareMathOperator{\F}{\mathcal{F}}
\begin{document}

\title{Chorded complexes and a necessary condition for a monomial ideal to have a linear resolution}

\author{ E. Connon\thanks{Research supported by NSERC and Killam scholarships.} \qquad \qquad S. Faridi\thanks{Research supported by NSERC.} }

\maketitle

\begin{center} \it \small
Department of Mathematics and Statistics, Dalhousie University, Canada
\end{center}

\begin{abstract}
In this paper we extend one direction of Fr\"oberg's theorem on a combinatorial classification of quadratic monomial ideals with linear resolutions.  We do this by generalizing the notion of a chordal graph to higher dimensions with the introduction of $d$-chorded and orientably-$d$-cycle-complete simplicial complexes.  We show that a certain class of simplicial complexes, the $d$-dimensional trees, correspond to ideals having linear resolutions over fields of characteristic $2$ and we also give a necessary combinatorial condition for a monomial ideal to be componentwise linear over all fields.
\end{abstract}

{\bf Keywords:} \ linear resolution, monomial ideal, chordal graph, simplicial complex, simplicial homology, Stanley-Reisner complex, facet complex, chordal hypergraph\\


\section{Introduction} \label{sec:intro}

One approach to studying algebraic properties of square-free monomial ideals is to examine the combinatorics of their associated simplicial complexes or hypergraphs.  To any square-free monomial ideal $I$ one can associate both a {\bf Stanley-Reisner complex} whose faces are the monomials not in $I$ and a {\bf facet complex} whose facets are the minimal generators of $I$.  One can also consider the facet complex as a hypergraph whose {\bf edge ideal} is $I$.  It turns out that specific algebraic characteristics often correspond to complexes or hypergraphs with a well-defined combinatorial structure. See, for example, \cite{Em10}, \cite{Far04}, \cite{FrVT07}, \cite{HaVT08}, \cite{HHZ06} and \cite{Wood11}.

In 1990, Fr\"oberg gave a complete combinatorial classification of the square-free monomial ideals having $2$-linear resolutions \cite{Fr90}.  He showed that the edge ideal of a graph has a linear resolution if and only if the complement of the graph is chordal.  Since then, finding a generalization of this theorem to square-free monomial ideals of higher dimensions has been an active area of research.  See, for example, \cite{Em10}, \cite{MNYZ12}, \cite{MYZ12}, and \cite{Wood11}.  In \cite{EagRein98}, Eagon and Reiner showed that the property of having a linear resolution is dual to the Cohen-Macaulay property.  Therefore finding a generalization of Fr\"oberg's theorem would provide a combinatorial classification, via the Alexander dual, of those square-free monomial ideals which are Cohen-Macaulay.  In fact, using the technique of polarization one would obtain a classification of all Cohen-Macaulay monomial ideals.  See \cite{Far07} for a description of this technique.

Unfortunately, it is too much to expect that the property of having a linear resolution can be described purely through a combinatorial property of an associated combinatorial structure as the existence of such a resolution does depend on the field over which the polynomial ring is defined.  The triangulation of the real projective plane is a typical example.  In this instance, the facet ideal of the complement of this complex has a linear resolution only when the characteristic of the field in question is not equal to $2$.  However, it is not unreasonable to expect that such a complete combinatorial classification may exist for ideals having linear resolutions over all fields.

The general approach to finding a generalization of Fr\"oberg's theorem has been to extend the definition of a chordal graph to higher dimensions.  In \cite{Em10}, \cite{HaVT08}, and \cite{Wood11} different notions of a ``chordal'' hypergraph are presented.  In \cite{Em10} and \cite{Wood11} Emtander and Woodroofe respectively prove that the edge ideal of the complement of a chordal hypergraph (under their respective definitions) has a linear resolution over all fields.  However, as we will see in Section \ref{sec:necessary_condition}, the converses of their theorems do not hold.

In our approach to extending Fr\"oberg's theorem we introduce the class of {\bf $d$-chorded} simplicial complexes and the class of {\bf orientably-$d$-cycle-complete} complexes.  In Section \ref{sec:necessary_condition} we prove the following theorem, which includes a generalization of one direction of Fr\"oberg's theorem and a necessary condition for an ideal to have a linear resolution.

\begin{thm} \label{thm:converse_intro}
Let $\Gamma$ be a simplicial complex.  If the facet ideal of $\Gamma$ has a $(d+1)$-linear resolution over $k$ then
\begin{enumerate}
\item the $d$-complement of $\Gamma$ is orientably-$d$-cycle-complete;
\item the $d$-complement of $\Gamma$ is $d$-chorded if $k$ has characteristic $2$.
\end{enumerate}
\end{thm}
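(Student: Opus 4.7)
The natural tool is Hochster's formula, which relates the graded Betti numbers of a Stanley--Reisner ideal to the reduced simplicial homology of induced subcomplexes. My first step would be to identify the Stanley--Reisner complex $\Delta$ attached to the facet ideal of $\Gamma$ and to reformulate the hypothesis of a $(d+1)$-linear resolution as the vanishing statement
\[
\tilde H_m(\Delta_W; k) = 0 \qquad \text{for every } W \subseteq V(\Delta) \text{ and every } m \neq d-1.
\]
Since the facet ideal must be generated in degree $d+1$, the complex $\Gamma$ is pure of dimension $d$, so the $d$-faces of $\Delta$ coincide with the $(d+1)$-subsets of the vertex set that are \emph{not} facets of $\Gamma$, i.e., with the $d$-faces of the $d$-complement of $\Gamma$. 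The remaining task is then purely topological: starting from any failure of the $d$-complement to be $d$-chorded (respectively, orientably-$d$-cycle-complete), exhibit an induced subcomplex $\Delta_W$ carrying nontrivial homology in a degree other than $d-1$.

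For part (1), I would argue by contrapositive. If the $d$-complement is not orientably-$d$-cycle-complete, there is an orientable $d$-cycle $C$ that admits no completion by faces of the $d$-complement. Orientability produces a nonzero class in the top homology of the subcomplex $C$ spans with coefficients in $\Z$, and hence in any field $k$. Taking $W$ to be the vertex support of $C$ and passing to $\Delta_W$, the absence of a completion translates into the absence of $(d+1)$-faces in $\Delta_W$ capable of bounding $C$, so the class persists in $\tilde H_d(\Delta_W; k)$, contradicting the displayed vanishing condition.

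Part (2) follows the same template, but the $d$-cycle witnessing failure of $d$-chordedness need no longer be orientable. Over a field of characteristic $2$ signs are irrelevant, so non-orientable cycles---modelled after the minimal triangulation of $\mathbb{RP}^2$---still produce nonzero classes in $\tilde H_\bullet(\Delta_W; k)$. This is precisely why the characteristic hypothesis appears in this half of the theorem: cycles that die over $\mathbb{Q}$ survive over $\Z/2\Z$, and these are exactly the cycles the stronger $d$-chorded condition is designed to rule out.

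The principal obstacle I anticipate is the combinatorial bookkeeping required to turn an un-chorded (or un-completed) $d$-cycle $C$ in the $d$-complement into a genuinely non-bounding cycle of $\Delta_W$ for an appropriate $W$. One must choose $W$ so that every potential $(d+1)$-dimensional bounding face in $\Delta_W$ is accounted for, and then verify that the hypothesized absence of chords or completions in the $d$-complement precisely prevents such faces from trivializing $C$. Making this translation rigorous, in the correct homological degree, and uniformly across the two flavours of failure, is where the real work lies.
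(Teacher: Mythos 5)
Your framing is the right one and matches the paper's: the hypothesis is converted via Fr\"oberg's homological criterion (Theorem \ref{thm:Frob_hom_char}) into vanishing of $\tilde H_i(\Delta_W;k)$ for $i\neq d-1$, and $\Delta$ is identified as the $d$-closure of the $d$-complement. But both halves of your sketch stop exactly where the actual content lies, and in part (2) the template you propose does not work.

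For part (1), the witness you get from failure of orientable-$d$-cycle-completeness is an \emph{orientably-vertex-minimal} $d$-dimensional cycle $\Omega$ that is not $d$-complete, and it is this minimality --- not the ``absence of a completion'' --- that forces $\Delta_W$ ($W=V(\Omega)$) to have no faces of dimension greater than $d$: a $(d+1)$-face of the $d$-closure would require a $(d+2)$-subset $S\subseteq W$ all of whose $(d+1)$-subsets are faces of the complement, and that set of faces is $\Lambda_{d+2}^d$, an orientable $d$-dimensional cycle on a \emph{strict} subset of $V(\Omega)$ (since $|W|>d+2$ by Proposition \ref{prop:smallest_cycle}), contradicting orientable-vertex-minimality. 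Without this step you cannot conclude that the class of $\Omega$ ``persists'': it is not enough to rule out $(d+1)$-faces ``capable of bounding $C$''; you need $\dim\Delta_W=d$ so that there are no $(d+1)$-chains at all. For part (2) the situation is worse: the witness to non-$d$-chordedness is a \emph{face}-minimal, non-$d$-complete cycle $\Omega$ with no chord set, and such an $\Omega$ need not be vertex-minimal, so $\Delta_{V(\Omega)}$ may well contain $(d+1)$-faces and there is no reason $\Omega$ is non-bounding. The paper's argument runs in the opposite direction: from the vanishing $\tilde H_d(\Delta_d(\Gamma^{[d]})_{V(\Omega)};\Z_2)=0$ it concludes that $\sum F_i$ \emph{is} a $\Z_2$-boundary, and then Lemma \ref{lem:bdry_cycles_have_chord_sets} extracts a chord set for $\Omega$ from the $d$-faces of the $(d+1)$-simplices appearing in a bounding chain (checking the parity conditions in the definition of a chord set). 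That boundary-to-chord-set construction is the essential missing idea; your remark that characteristic $2$ merely lets non-orientable cycles survive explains why the hypothesis is needed, but does not supply the mechanism by which chord sets are produced.
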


Theorem \ref{thm:converse_intro} can also be stated in terms of Stanley-Reisner ideals.  It appears this way in Section \ref{sec:necessary_condition} as Theorem \ref{thm:one_dir}.

Our definitions of $d$-chorded complexes and orientably-$d$-cycle-complete complexes use the notion of a {\bf $d$-dimensional cycle} in a simplicial complex.  This concept was introduced by the first author in \cite{Con13} as a higher-dimensional notion of a graph cycle.

The idea of a $d$-dimensional cycle leads naturally to the concept of a {\bf $d$-dimensional tree}, in which $d$-dimensional cycles are forbidden.  These simplicial complexes correspond to ideals with linear resolutions over fields of characteristic $2$.

\begin{thm} \label{thm:trees_intro}
The facet ideal of the $d$-complement of a $d$-dimensional tree has a $(d+1)$-linear resolution over any field of characteristic $2$.
\end{thm}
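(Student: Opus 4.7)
The plan is to apply Hochster's formula to translate the $(d+1)$-linear resolution condition into a vanishing statement for reduced simplicial homology with $k$-coefficients. Writing $\F(\Gamma^c) = I_\Delta$, where $\Delta$ is the simplicial complex on $V = V(\Gamma)$ whose faces are those subsets $S \subseteq V$ every $(d+1)$-subset of which is a facet of $\Gamma$, the statement reduces to showing
\[
\tilde{H}_i(\Delta_W; k) = 0 \quad \text{for all } W \subseteq V \text{ and all } i \neq d - 1.
\]
The induced subcomplex $\Gamma_W$ of a $d$-dimensional tree is again a $d$-dimensional tree, since any $d$-cycle in $\Gamma_W$ is also a $d$-cycle in $\Gamma$, and the construction $\Gamma \mapsto \Delta$ commutes with restriction to $W$. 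It therefore suffices to prove $\tilde{H}_i(\Delta; k) = 0$ for $i \neq d - 1$ in the absolute case.

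I would then handle the vanishing in three ranges. For $i < d - 1$, the complex $\Delta$ contains the full $(d-1)$-skeleton of the simplex on $V$, which is $(d-2)$-connected, so these groups vanish automatically; the case $i = d - 1$ is exactly the permitted one. For $i = d$, the $d$-faces of $\Delta$ agree with the facets of $\Gamma$, so the boundary operator $\partial_d$ on $\Delta$ coincides with that on $\Gamma$, and the $d$-tree hypothesis should give $\ker \partial_d = 0$ with $\mathbb{Z}/2$-coefficients, whence $\tilde{H}_d(\Delta; k) = 0$. For $i > d$, I would argue that $\Delta$ has no faces of dimension greater than $d$: if $S$ were a $(d+1)$-face of $\Delta$, then $\partial_{d+1} S$ would be the mod-$2$ sum of $d+2$ distinct facets of $\Gamma$ lying in $\ker \partial_d$, producing a $d$-cycle in $\Gamma$ and contradicting the tree property. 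Closure under subsets of simplicial complexes then rules out higher-dimensional faces as well, and the vanishing for $i > d$ follows trivially.

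The main obstacle will be reconciling the combinatorial notion of ``$d$-dimensional cycle'' from \cite{Con13} with the chain-level condition $\ker \partial_d \neq 0$ over $\mathbb{Z}/2$. If a $d$-cycle is defined as the minimal support of a nonzero element of $\ker \partial_d$ modulo $2$, the translation is immediate; otherwise a combinatorial support-extraction argument is needed to pass from a nonzero $\mathbb{Z}/2$-cycle to a Connon-style cycle within it. The characteristic $2$ hypothesis is indispensable precisely at this step, as the triangulated real projective plane shows that $\ker \partial_d$ can be nonzero over other characteristics even when $\Gamma$ admits no oriented $\mathbb{Z}$-cycle, which is why the conclusion is restricted to fields of characteristic $2$.
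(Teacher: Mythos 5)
Your proposal is correct and follows essentially the same route as the paper's proof (Theorem \ref{thm:tree_has_lin_res}): reduce via the homological criterion (Theorem \ref{thm:Frob_hom_char}) and Lemma \ref{lem:delta_operation_induced} to the absolute case, kill homology below $d-1$ using the full skeleton, at level $d$ using the absence of $d$-dimensional cycles, and above $d$ by showing $\Delta_d(\Gamma)$ has no faces of dimension greater than $d$. The ``translation'' between combinatorial $d$-dimensional cycles and nonzero elements of $\ker\partial_d$ over $\Z_2$ that you flag as the main obstacle is precisely Proposition \ref{prop:ddimcycle_is_dcycle} and Theorem \ref{thm:cycle_hom_z2}, cited from \cite{Con13}, so it is already available.
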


Theorem \ref{thm:trees_intro} also implies that the facet ideal of the $d$-complement of a pure $d$-dimensional {\bf simplicial tree} (\cite{Far02}) has a linear resolution over fields of characteristic $2$.

For monomial ideals whose generators are not all of the same degree, the analogous notion to having a linear resolution is the property of being componentwise linear.  The notion of a {\bf chorded} simplicial complex extends the idea of a $d$-chorded complex to the non-pure case and we are able to show the following theorem.

\begin{thm} \label{thm:cwl_intro}
If the Stanley-Reisner ideal of the simplicial complex $\Gamma$ is componentwise linear over every field $k$ then $\Gamma$ is chorded.
\end{thm}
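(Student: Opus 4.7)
The plan is to reduce Theorem \ref{thm:cwl_intro} to Theorem \ref{thm:one_dir} via the Herzog--Hibi characterization of componentwise linear ideals: a graded ideal $I$ is componentwise linear if and only if, for each integer $d$, the ideal $I_{\langle d \rangle}$ generated by the degree-$d$ piece of $I$ has a $d$-linear resolution.

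First I would identify $I_{\Gamma, \langle d+1 \rangle}$ as a Stanley-Reisner ideal. Its generators are precisely the squarefree monomials of degree $d+1$ associated to non-faces of $\Gamma$, and it coincides with the Stanley-Reisner ideal of the complex $\Gamma^{(d)}$ obtained from $\Gamma$ by adjoining every subset of $V(\Gamma)$ of size at most $d$ as a face. By construction $\Gamma^{(d)}$ has complete $(d-1)$-skeleton, its $d$-faces are exactly the $d$-faces of $\Gamma$, and its minimal non-faces are exactly the $(d+1)$-subsets of $V(\Gamma)$ that are non-faces of $\Gamma$. Invoking the componentwise linear hypothesis over a field of characteristic $2$, each $I_{\Gamma^{(d)}}$ has a $(d+1)$-linear resolution over such a field, so applying part (2) of Theorem \ref{thm:one_dir} to $\Gamma^{(d)}$ yields that $\Gamma^{(d)}$ is $d$-chorded for every $d$.

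The final step is to translate ``$\Gamma^{(d)}$ is $d$-chorded for every $d$'' into ``$\Gamma$ is chorded''. Since the $d$-faces of $\Gamma^{(d)}$ coincide with those of $\Gamma$, the lower-dimensional faces inserted to complete the skeleton of $\Gamma^{(d)}$ contribute no new $d$-cycles or chords, and the non-pure notion of chorded in the paper is designed to extend $d$-chordedness dimension by dimension. The main obstacle of the proof is therefore not any hard combinatorics but rather a piece of definitional bookkeeping: one must verify that the non-pure chorded condition on $\Gamma$ unpacks exactly as the conjunction, over all $d$, of the $d$-chorded conditions on the $\Gamma^{(d)}$, using that $d$-dimensional cycles of $\Gamma$ and of $\Gamma^{(d)}$ are literally the same objects. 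Granting this verification, the theorem follows.
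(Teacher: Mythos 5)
Your proposal is correct and takes essentially the same route as the paper: for each $d$ you apply the pure-case result (Theorem \ref{thm:one_dir} part 2, packaged in the paper as Corollary \ref{cor:one_dir_over_all}) to the Stanley-Reisner complex of $\N(\Gamma)_{[d+1]}$ and then identify its pure $d$-skeleton with $\Gamma^{[d]}$, which is precisely the paper's Lemma \ref{lem:equiv_of_skels}. One minor slip worth noting: the complex whose Stanley-Reisner ideal is $\N(\Gamma)_{[d+1]}$ is the full closure $\Delta_d(\Gamma^{[d]})$, not merely $\Gamma$ with all subsets of size at most $d$ adjoined (any larger set all of whose $(d+1)$-subsets are faces of $\Gamma$ must also be filled in), but since only the $d$-skeleton and the degree-$(d+1)$ minimal non-faces enter your argument this does not affect the proof.
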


Theorem \ref{thm:converse_intro} is a special case of Theorem \ref{thm:cwl_intro}.

This paper is organized as follows.  In the next section we review some basic definitions from simplicial homology, Stanley-Reisner theory, and commutative algebra as well as discussing Fr\"oberg's original theorem on square-free monomial ideals with $2$-linear resolutions.  In Section \ref{sec:d_dim_cycles} we discuss $d$-dimensional cycles and their structure.  In Section \ref{sec:dchorded_complexes} we introduce $d$-chorded complexes, $d$-cycle-complete complexes and $d$-dimensional trees.  In Section \ref{sec:homology_ddim_cycles} we discuss the simplicial homology of $d$-dimensional cycles and of related structures.  In Section \ref{sec:necessary_condition} we prove Theorem \ref{thm:converse_intro} and in Section \ref{sec:does_chorded_have_LR} we discuss its converse and prove Theorem \ref{thm:trees_intro}.  Finally, in Section \ref{sec:chorded} we introduce the notion of a chorded simplicial complex and prove Theorem \ref{thm:cwl_intro}.

\begin{ack}
The authors would like to thank MSRI for their hospitality during the preparation of this paper.
\end{ack}


\section{Preliminaries}

\subsection{Simplicial complexes and simplicial homology} \label{sec:simp_cmpx_hom}

An (abstract) {\bf simplicial complex} $\Gamma$ on the finite {\bf vertex set} $V$ is a set of subsets of $V$ such that $\{v\} \in \Gamma$ for all $v \in V$ and for any $F \in \Gamma$ if $G \subseteq F$ then $G \in \Gamma$.  The elements of $V$ are {\bf vertices} of $\Gamma$ and the elements of $\Gamma$ are called {\bf faces} or {\bf simplices} of $\Gamma$.  Faces of $\Gamma$ that are maximal with respect to inclusion are called {\bf facets} of $\Gamma$ and we use the notation $\facets(\Gamma)$ for this set of faces.  We denote the vertex set of $\Gamma$ by $V(\Gamma)$.  If $\facets(\Gamma)=\{F_1,\ldots,F_k\}$ then we write
\[
    \Gamma = \langle F_1,\ldots, F_k \rangle.
\]
If $F$ is a face of $\Gamma$ then the {\bf dimension} of $F$ denoted by $\dim F$, is $|F|-1$ while the dimension of $\Gamma$ itself is
\[
    \dim \Gamma = \max\{\dim F: F \in \Gamma\}.
\]
A face of dimension $n$ is called an {\bf $n$-face} or an {\bf $n$-simplex}.  If all facets of $\Gamma$ have the same dimension then $\Gamma$ is said to be {\bf pure}.  The {\bf $d$-complement} of a pure $d$-dimensional simplicial complex $\Gamma$ is the complex on $V(\Gamma)$ whose facets are the $(d+1)$-subsets of $V(\Gamma)$ that are not $d$-faces of $\Gamma$.  A simplicial complex $\Gamma$ is said to be {\bf $d$-complete} if all possible subsets of $V(\Gamma)$ of size $d+1$ are faces of $\Gamma$.  The {\bf complete graph} on $n$ vertices $K_n$ is a $1$-complete simplicial complex.

The {\bf pure $d$-skeleton} of a simplicial complex $\Gamma$, denoted $\Gamma^{[d]}$, is the simplicial complex on the same vertex set as $\Gamma$ whose facets are the $d$-faces of $\Gamma$.  A {\bf subcomplex} of $\Gamma$ is any simplicial complex whose set of facets is a subset of the faces of $\Gamma$.  Given any $W \subseteq V(\Gamma)$ the {\bf induced subcomplex of $\Gamma$ on $W$} is the complex
\[
    \Gamma_W=\{F \in \Gamma | F \subseteq W \}.
\]

To any $d$-face in a simplicial complex we can assign an {\bf orientation} by specifying an ordering of its vertices.  Two orientations are said to be {\bf equivalent} if one is an even permutation of the other.  Thus there are only two equivalence classes of orientations for each face.  By an {\bf oriented $d$-face} we mean a $d$-face with a choice of one of these orientations.  We denote the $d$-face on vertices $v_0,\ldots,v_d$ with the orientation $v_0 < \cdots < v_d$ by $[v_0,\ldots,v_d]$.  We will also need the concept of {\bf induced orientation} of a face in the simplicial complex.

\begin{definition}[{\bf induced orientation}]
Given an orientation of a $d$-face in a simplicial complex the {\bf induced orientation} of any $(d-1)$-subface is given by the following procedure, where $v_0$ is considered to be in an even position:
\begin{itemize}
\item if the vertex removed to obtain the $(d-1)$-face was in an odd position of the ordering then the orientation of the $(d-1)$-face is the same as the ordering of its vertices in the $d$-face
\item if the vertex removed to obtain the $(d-1)$-face was in an even position of the ordering then the orientation of the $(d-1)$-face is given by any odd permutation of the ordering of the vertices in the $d$-face
\end{itemize}
\end{definition}

Let $A$ be a commutative ring with unit.  Then we define $C_d(\Gamma)$ to be the free $A$-module whose basis is the oriented $d$-faces of $\Gamma$ and where $[v_0,v_1,\ldots,v_d]=-[v_1,v_0,\ldots,v_d]$.  The elements of $C_d(\Gamma)$ are called {\bf $d$-chains}.  The {\bf support complex} of a $d$-chain is the complex whose facets are the $d$-faces in the $d$-chain whose coefficients are non-zero.

There is a natural boundary map homomorphism $\partial_d$ from the space of $d$-chains to the space of $(d-1)$-chains defined by setting
\[
    \partial_d([v_0,\ldots,v_d]) = \sum_{i=0}^{d}(-1)^i [v_0,\ldots,v_{i-1},v_{i+1},\ldots,v_d]
\]
for each oriented $d$-face $[v_0,\ldots,v_d]$.  The kernel of $\partial_d$ is called the group of {\bf $d$-cycles} and the image of $\partial_d$ is called the group of {\bf $(d-1)$-boundaries}.   The {\bf $d$th simplicial homology group} of $\Gamma$ over $A$ is equal to the quotient of the group of $d$-cycles over the group of $d$-boundaries and is denoted  $H_d(\Gamma;A)$.  Roughly speaking, a non-zero element of $H_d(\Gamma;A)$ indicates the presence of a ``$d$-dimensional hole'' in the complex.  For example, when $\Gamma$ is a triangulation of a sphere then $H_2(\Gamma;\Z)$ is non-zero.

We can also define a homomorphism
$\epsilon: C_0(\Gamma) \rightarrow A$
by $\epsilon(v)=1$ for each $v \in V(\Gamma)$. The {\bf reduced homology group} of $\Gamma$ in dimension $0$, denoted $\tilde{H}_0(\Gamma;A)$, is defined by
\[
    \tilde{H}_0(\Gamma;A) = \ker \epsilon / \im \partial_1.
\]
We set $\tilde{H}_i(\Gamma;A)=H_i(\Gamma;A)$ for $i>0$ to obtain the {\bf$i$th  reduced homology group} of $\Gamma$.  For a more detailed description of simplicial homology see \cite{Munk84}.


\subsection{Stanley-Reisner ideals and facet ideals}

To any simplicial complex $\Gamma$ on the vertex set $\{x_1,\ldots,x_n\}$ we can associate, in two different ways, a square-free monomial ideal in the polynomial ring $R = k[x_1,\ldots,x_n]$ where $k$ is a field.  Given a subset $F = \{x_{i_1},x_{i_2},\ldots,x_{i_k}\}$ of $V(\Gamma)$ we define $x^F$ to be the monomial $x_{i_1}x_{i_2}\cdots x_{i_k}$ in $R$. The {\bf Stanley-Reisner ideal} of $\Gamma$ is the ideal
\[
    \N(\Gamma) = \left( \{x^F: F \notin \Gamma\} \right)
\]
and the {\bf facet ideal} of $\Gamma$ (or the {\bf edge ideal} if we think of $\Gamma$ as a hypergraph) is the ideal
\[
    \F(\Gamma) = \left( \{x^F: F \in \Gamma \} \right).
\]
The {\bf Stanley-Reisner ring} of $\Gamma$ is the ring $k[\Gamma]=R/\N(\Gamma)$.

The {\bf Stanley-Reisner complex} of the square-free monomial ideal $I$ is the complex $\N(I)$ whose faces are given by the square-free monomials not in $I$.  The {\bf facet complex} of $I$ is the complex $\F(I)$ whose facets are given by the minimal monomial generators of $I$.  See Figure \ref{fig:SR_complex_new} for examples of these relationships.

\begin{figure}[h!]
{\centering
    \includegraphics[height=1.15in]{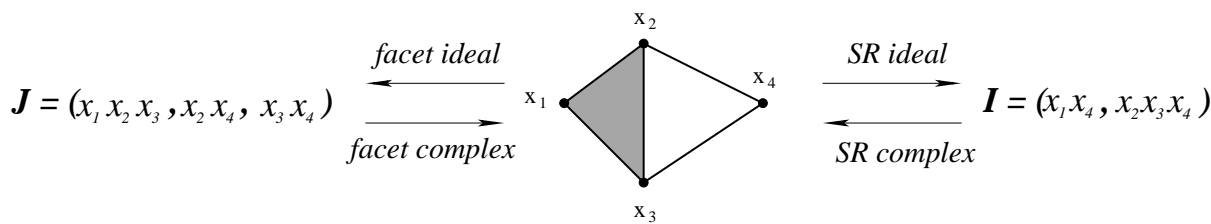}
    \caption {Relationship between simplicial complexes and ideals.} \label{fig:SR_complex_new}

}
\end{figure}


\subsection{Linear resolutions and Fr\"oberg's Theorem} \label{sec:F_orig_thm}
The monomial ideal $I$ in the polynomial ring $R=k[x_1,\ldots,x_n]$ is said to have a {\bf $d$-linear resolution over $k$}, or simply a {\bf linear resolution over $k$}, if all generators of $I$ have degree $d$ and in a minimal graded free resolution of $I$
{
\[
	0 \rightarrow \bigoplus_j R(-j)^{\beta_{k,j}(I)} \rightarrow \bigoplus_j R(-j)^{\beta_{k-1,j}(I)} \rightarrow \cdots \rightarrow \bigoplus_j R(-j)^{\beta_{0,j}(I)} \rightarrow I \rightarrow 0
\]
}
we have $\beta_{i,j}(I)=0$ for all $j \neq i+d$.

In 1990, Fr\"oberg gave a characterization of ideals with 2-linear resolutions in terms of the combinatorial structure of an associated graph.

Recall that a {\bf graph cycle} is a sequence of adjacent vertices in a graph in which the vertices are distinct and the last vertex in the sequence is adjacent to the first vertex of the sequence.  A graph $G$ is called {\bf chordal} if all cycles in $G$ of length greater than three have a {\bf chord}, where a chord of a cycle is an edge between non-adjacent vertices of the cycle.  The {\bf complement} of a graph $G$ is the graph on the same vertex set as $G$ but whose edges are exactly those $2$-sets that are not edges of $G$.

Given a graph $G$ we can obtain a simplicial complex $\Delta(G)$, called the {\bf clique complex} of $G$, by taking the sets of vertices of complete subgraphs of $G$ as the faces of $\Delta(G)$.

\begin{thm}[Fr\"oberg \cite{Fr90}] \label{thm:Frob_original}
If a graph $G$ is chordal, then $\N(\Delta(G))$ has a $2$-linear resolution over any field.  Conversely, if $\N(\Gamma)$ has a $2$-linear resolution over some field, then $\Gamma = \Delta(\Gamma^{[1]})$ and $\Gamma^{[1]}$ is chordal.
\end{thm}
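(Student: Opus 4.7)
The plan is to deploy Hochster's formula
\[
\beta_{i,j}(k[\Gamma]) = \sum_{\substack{W \subseteq V(\Gamma) \\ |W| = j}} \dim_k \tilde{H}_{j-i-1}(\Gamma_W; k),
\]
which links the graded Betti numbers of $k[\Gamma] = R/\N(\Gamma)$ to the reduced simplicial homology of induced subcomplexes. Combined with the shift $\beta_{i,j}(\N(\Gamma)) = \beta_{i+1,j}(k[\Gamma])$, the $2$-linear resolution property over $k$ translates into two conditions: (a) every minimal generator of $\N(\Gamma)$ is quadratic, equivalently every minimal non-face of $\Gamma$ is a $2$-set, equivalently $\Gamma = \Delta(\Gamma^{[1]})$; and (b) $\tilde{H}_\ell(\Gamma_W; k) = 0$ for every $W \subseteq V(\Gamma)$ and every $\ell \geq 1$.

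For the backward direction, condition (a) is immediate from the degrees of the generators. To deduce chordality of $\Gamma^{[1]}$, I would argue by contradiction: suppose $\Gamma^{[1]}$ contains an induced cycle on a vertex set $W = \{v_1, \ldots, v_n\}$ with $n \geq 4$. Because the cycle is induced and $\Gamma$ is a flag complex by (a), $\Gamma_W$ has no edges other than those of the cycle and no $2$-faces at all, so as a topological space it is homeomorphic to $S^1$ and $\tilde{H}_1(\Gamma_W; k) \cong k$, contradicting (b).

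For the forward direction I would induct on $|V(G)|$. Condition (a) holds automatically since $\Gamma = \Delta(G)$ is a clique complex. For (b), note that $\Delta(G)_W = \Delta(G[W])$ for every $W \subseteq V(G)$ and that induced subgraphs of chordal graphs are chordal; it therefore suffices to show $\tilde{H}_\ell(\Delta(G); k) = 0$ for $\ell \geq 1$ whenever $G$ is chordal. By Dirac's theorem $G$ has a simplicial vertex $v$, and since $N(v)$ is a clique of $G$, the star of $v$ in $\Delta(G)$ is the single simplex $\langle \{v\} \cup N(v) \rangle$ and the link of $v$ is $\langle N(v) \rangle$, both contractible. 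Writing $\Delta(G) = \Delta(G - v) \cup \mathrm{star}(v)$ with intersection $\mathrm{link}(v)$ and applying the reduced Mayer--Vietoris sequence yields $\tilde{H}_\ell(\Delta(G); k) \cong \tilde{H}_\ell(\Delta(G - v); k)$ for $\ell \geq 1$, which vanishes by the inductive hypothesis applied to the smaller chordal graph $G - v$.

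The main technical obstacle is the Hochster-formula translation that reduces $2$-linearity to the pair of conditions above, since this requires careful bookkeeping with the shift between $\N(\Gamma)$ and $k[\Gamma]$ and the verification that no relevant homology term in the Hochster sum lies outside the diagonal $j = i + 2$. Once that translation is in hand the combinatorial core of each direction (a contractibility-from-simplicial-vertex argument on the one side, an induced-cycle-has-$S^1$-homology argument on the other) is relatively clean and characteristic-free.
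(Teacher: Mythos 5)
The paper does not actually prove this theorem---it is quoted from Fr\"oberg \cite{Fr90}, with the remark that the proof rests on the homological characterization recorded as Theorem \ref{thm:Frob_hom_char}. Your argument is correct and follows exactly that route: your Hochster-formula translation is precisely Theorem \ref{thm:Frob_hom_char} specialized to $t=2$, and the simplicial-vertex/Mayer--Vietoris induction and the induced-cycle-gives-$S^1$ argument are the standard way to finish; the only loose end is the degenerate case of an isolated simplicial vertex, where the link is empty and the reduced Mayer--Vietoris comparison still gives $\tilde{H}_\ell(\Delta(G);k)\cong\tilde{H}_\ell(\Delta(G-v);k)$ for $\ell\ge 1$ but deserves a one-line remark.
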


This theorem is more commonly stated in the following way.

\begin{thm}[Fr\"oberg]
The edge ideal of a graph $G$ has a $2$-linear resolution over a field if and only if the complement of $G$ is chordal.
\end{thm}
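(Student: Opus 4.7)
My plan is to reduce this theorem to Theorem~2.1 by identifying the edge ideal with a Stanley-Reisner ideal. Writing $G^c$ for the graph complement of $G$, the key observation is that
\[
    \F(G) = \N(\Delta(G^c)).
\]
To justify this, I would verify that the minimal non-faces of any clique complex $\Delta(H)$ are precisely its non-edges: any candidate minimal non-face $F$ of size $\geq 3$ has every $2$-subset a face of $\Delta(H)$, so every pair of vertices of $F$ is an edge of $H$, making $F$ itself a clique and therefore a face. Hence the minimal non-faces of $\Delta(G^c)$ are exactly the non-edges of $G^c$, which are exactly the edges of $G$, so the Stanley-Reisner ideal $\N(\Delta(G^c))$ and the edge ideal $\F(G)$ have the same degree-$2$ minimal generators.

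With this identification, both implications follow by plugging directly into Theorem~2.1. For the forward direction, if $G^c$ is chordal then Theorem~2.1 gives that $\N(\Delta(G^c))$ has a $2$-linear resolution over every field, hence so does $\F(G)$. For the converse, if $\F(G)$ has a $2$-linear resolution over some field, then so does $\N(\Delta(G^c))$; applying the second half of Theorem~2.1 with $\Gamma = \Delta(G^c)$ forces $(\Delta(G^c))^{[1]}$ to be chordal. Since the facets of $(\Delta(G^c))^{[1]}$ are by definition the $1$-faces of $\Delta(G^c)$, and these are precisely the edges of $G^c$, we conclude $(\Delta(G^c))^{[1]} = G^c$ and so $G^c$ is chordal.

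The argument is essentially a dictionary translation, so no step is a genuine obstacle. The only place meriting care is the equality $\F(G) = \N(\Delta(G^c))$, which is specific to clique complexes and which is what makes Fr\"oberg's theorem admit these two equivalent formulations; one should also be mindful of the side-remark that in the clique-complex formulation the statement $\Gamma = \Delta(\Gamma^{[1]})$ in Theorem~2.1 is automatic here since $\Delta(G^c) = \Delta((\Delta(G^c))^{[1]})$.
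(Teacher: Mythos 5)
Your proposal is correct and follows the paper's own route: the paper deduces this formulation from Theorem 2.1 precisely by noting that the edge ideal of a graph is the Stanley--Reisner ideal of the clique complex of its complement, which is the identity $\F(G) = \N(\Delta(G^c))$ you verify. Your check that clique complexes have no minimal non-faces of size $\geq 3$ is exactly the detail the paper leaves to the reader.
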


It is not hard to confirm the equivalence of these two theorems by noticing that the edge ideal of the complement of a graph $G$ is the Stanley-Reisner ideal of the clique complex of $G$.

The proof of Fr\"oberg's theorem relies on the following homological characterization of the square-free monomial ideals with linear resolutions.

\begin{thm}[Fr\"oberg \cite{Fr85}] \label{thm:Frob_hom_char}
A square-free monomial ideal $I$ has a $t$-linear resolution over $k$ if and only if for every induced subcomplex $\Gamma$ of $\N(I)$ we have $\tilde{H}_i(\Gamma;k) = 0$ for $i \neq t-2$.
\end{thm}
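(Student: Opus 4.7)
The plan is to prove this via Hochster's formula for the graded Betti numbers of a Stanley-Reisner ideal. Writing $\Delta = \N(I)$ on vertex set $V$, Hochster's formula asserts
\[
\beta_{i,j}(I) \;=\; \sum_{\substack{W \subseteq V \\ |W| = j}} \dim_k \tilde{H}_{j - i - 2}(\Delta_W;\, k).
\]
I would invoke this as a standard result; deriving it (via the multigraded Koszul complex on the exponent lattice $\Z^n$, followed by a careful degree-by-degree analysis) is the principal technical ingredient and would be the main obstacle to a fully self-contained write-up.

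With Hochster's formula in hand the argument reduces to bookkeeping. By definition, $I$ has a $t$-linear resolution over $k$ precisely when $\beta_{i,j}(I) = 0$ for every pair $(i,j)$ with $j - i \neq t$; this single vanishing condition simultaneously forces every minimal generator of $I$ to lie in degree $t$ (the $i = 0$ case gives $\beta_{0,j}(I) = 0$ for $j \neq t$) and enforces linearity of the higher syzygies. Substituting $m = j - i - 2$, the condition becomes
\[
\sum_{|W| = i + m + 2} \dim_k \tilde{H}_m(\Delta_W;\, k) \;=\; 0 \quad \text{for every } i \geq 0 \text{ and every } m \neq t - 2.
\]
Because each summand is a nonnegative integer, this equality is equivalent to the individual vanishings $\tilde{H}_m(\Delta_W;\, k) = 0$ for every $W \subseteq V$ and every $m \neq t - 2$. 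Since the induced subcomplexes of $\N(I)$ are precisely the complexes $\Delta_W$, this yields both directions of the equivalence simultaneously.

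Beyond citing Hochster's formula, the only points requiring attention are the low-dimensional edge cases (for instance $m = -1$, where $\tilde{H}_{-1}(\Delta_W;\, k) = 0$ already holds for every nonempty $W$) and the routine check that any variables not appearing among the vertices of $\N(I)$ contribute trivially to both the combinatorial and the algebraic sides. The substantive content of the proof really does lie in setting up Hochster's formula; once it is available, translating the linear-resolution condition into the homological vanishing condition is essentially a reindexing exercise.
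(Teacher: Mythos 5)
The paper does not prove this statement at all --- it is quoted as Theorem \ref{thm:Frob_hom_char} directly from Fr\"oberg \cite{Fr85} and used as a black box throughout --- so there is no internal proof to compare against. Your derivation via Hochster's formula is the standard modern route to this characterization, and the reindexing is correct: with $\beta_{i,j}(I)=\sum_{|W|=j}\dim_k\tilde{H}_{j-i-2}(\N(I)_W;k)$, the linearity condition $\beta_{i,j}(I)=0$ for $j\neq i+t$ translates, since all summands are nonnegative, into $\tilde{H}_m(\N(I)_W;k)=0$ for all $W$ and all $m\neq t-2$; the pairs $(i,j)$ with $i<0$ that your substitution skips correspond to $m\geq |W|-1$, where the homology of a complex on $|W|$ vertices vanishes automatically, so no cases are lost. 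The main cost of your approach is that the entire technical weight sits in Hochster's formula, which you cite rather than prove --- acceptable, since it is at least as standard a result as the statement being proved.

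One small imprecision: your remark that variables not appearing among the vertices of $\N(I)$ ``contribute trivially to both sides'' is backwards. If some variable $x_v$ lies in $I$, then $v\notin V(\N(I))$, the set $W=\{v\}$ gives $\N(I)_W=\{\emptyset\}$ with $\tilde{H}_{-1}(\{\emptyset\};k)=k$, and Hochster's formula correctly reports the degree-one generator via $\beta_{0,1}(I)\neq 0$; but the induced subcomplexes on subsets of $V(\N(I))$, as the theorem is phrased, cannot see this. So the statement as written implicitly assumes no variable belongs to $I$ (equivalently, every vertex is a face of $\N(I)$), and under that standard convention your claim that $\tilde{H}_{-1}(\N(I)_W;k)=0$ for nonempty $W$ is correct and the edge case evaporates. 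This does not affect the substance of the argument.
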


The success of Fr\"oberg's proof of Theorem \ref{thm:Frob_original} greatly relies on the fact that a graph cycle is the correct combinatorial notion to capture the idea of non-zero 1-dimensional homology in a simplicial complex.  Our approach to extending Fr\"oberg's Theorem to higher dimensions involves identifying the combinatorial structures in simplicial complexes which lead to non-zero homology.  By creating a class of complexes in which this type of structure is restricted in certain ways we are able to make use of the homological classification given in Theorem \ref{thm:Frob_hom_char}.


\section{$d$-dimensional cycles} \label{sec:d_dim_cycles}
Before introducing the notion of a higher-dimensional cycle, we need the following definitions.

\begin{definition}[{\bf $d$-path, $d$-path-connected, $d$-path-connected components}]
A sequence $F_1,\ldots,F_k$ of $d$-faces in a simplicial complex is called a {\bf $d$-path} between $F_1$ and $F_k$ if for all $1 \leq i \leq k-1$ we have that $|F_i \cap F_{i+1}| = d$.  A pure $d$-dimensional simplicial complex is {\bf $d$-path-connected} or {\bf strongly connected} if there exists a $d$-path between each pair of its $d$-faces.  The {\bf $d$-path-connected components} of a pure $d$-dimensional simplicial complex $\Gamma$ are the maximal subcomplexes of $\Gamma$ which are $d$-path-connected.
\end{definition}

Note that two distinct $d$-path-connected components of a complex do not share any $(d-1)$-faces.  In Figure \ref{fig:dpathcomponents} we have an example of a pure $2$-dimensional simplicial complex with two $2$-path-connected components which are emphasized by different levels of shading.  The left-hand component is an example of a $2$-path between the $2$-faces $F$ and $G$.

\begin{figure}[h!]
{\centering
    \includegraphics[height=.75in]{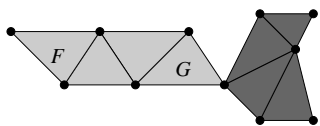}
    \caption {Example of a $2$-path and $2$-path-connected components.} \label{fig:dpathcomponents}

}
\end{figure}

\begin{definition} [Connon~\cite{Con13} {\rm{\bf $d$-dimensional cycle}}]
A pure $d$-dimensional simplicial complex $\Omega$ is called a {\bf $d$-dimensional cycle} if
\begin{enumerate}
\item $\Omega$ is $d$-path-connected, and
\item every $(d-1)$-face of $\Omega$ is contained in an even number of $d$-faces of $\Omega$.
\end{enumerate}
\end{definition}

In Figure \ref{fig:2d_cycles} we give several examples of $2$-dimensional cycles.

\begin{figure}[h]
\centering
\subfloat[A hollow tetrahedron]{\makebox[7cm]{
            \includegraphics[height=1in]{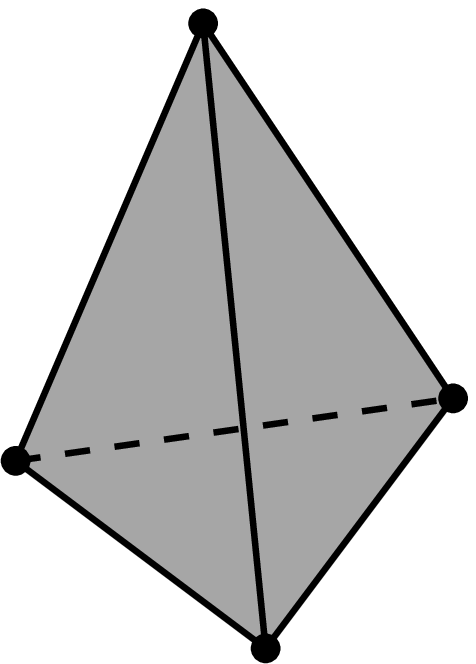}}
            \label{fig:hollow_tetra}} \qquad
\subfloat[A triangulation of the real projective plane]{\makebox[7cm]{
	\includegraphics[height=.9in]{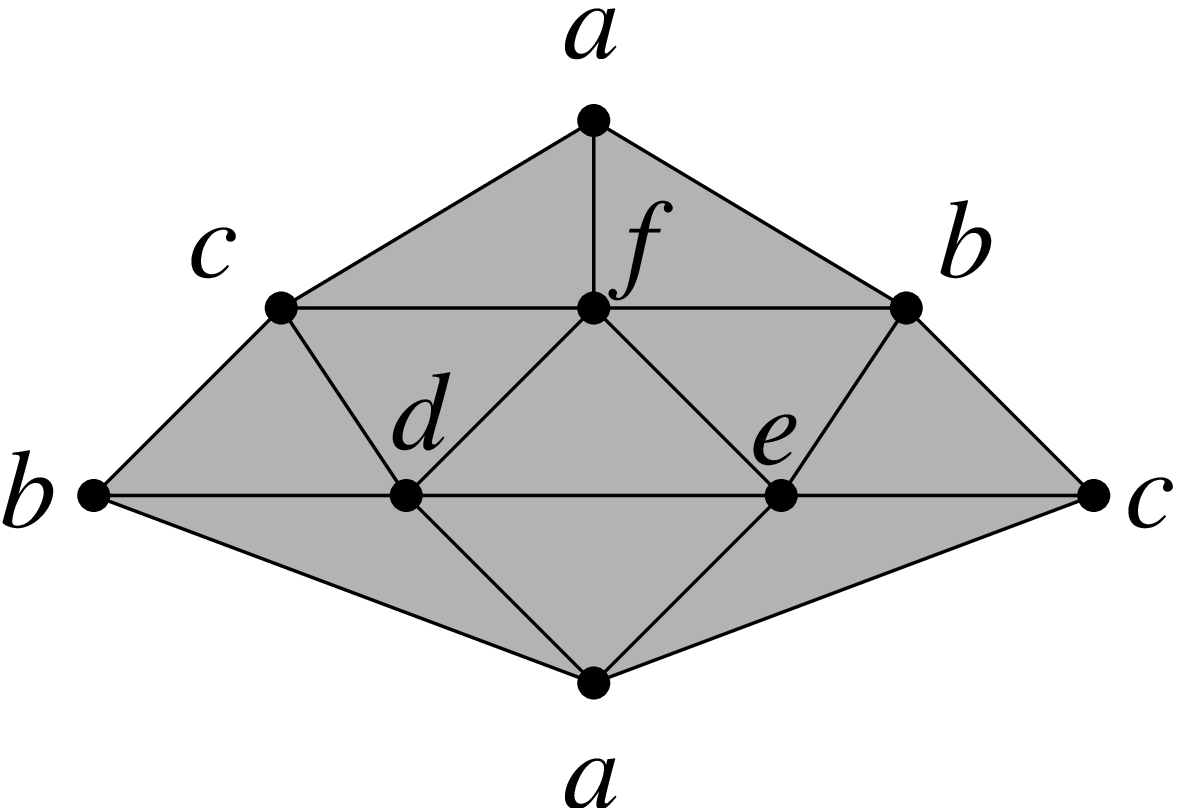}}
	\label{fig:real_proj_plane}} \\
\subfloat[A triangulation of the sphere]{\makebox[7cm]{
	\includegraphics[height=.9in]{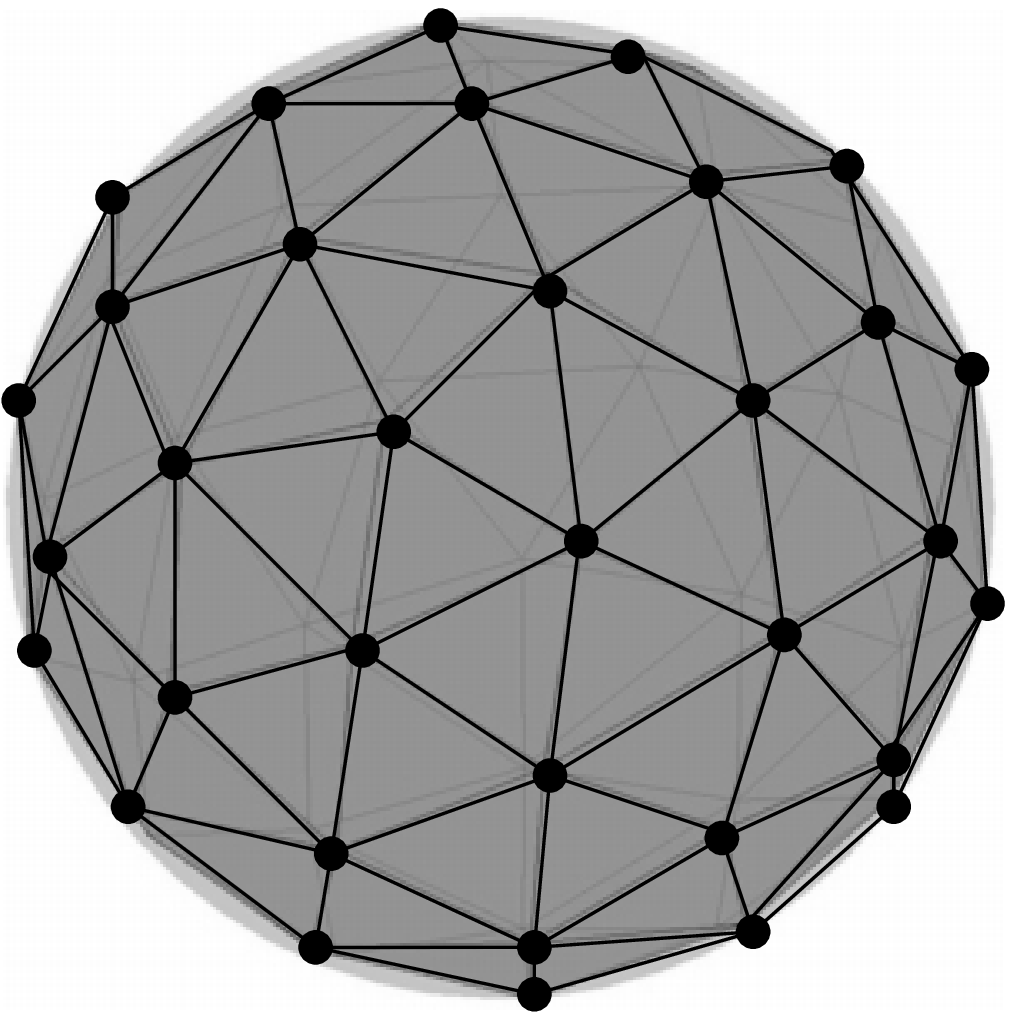}}
	\label{fig:sphere}} \qquad
\subfloat[A triangulation of the torus]{\makebox[7cm]{
	\includegraphics[height=.9in]{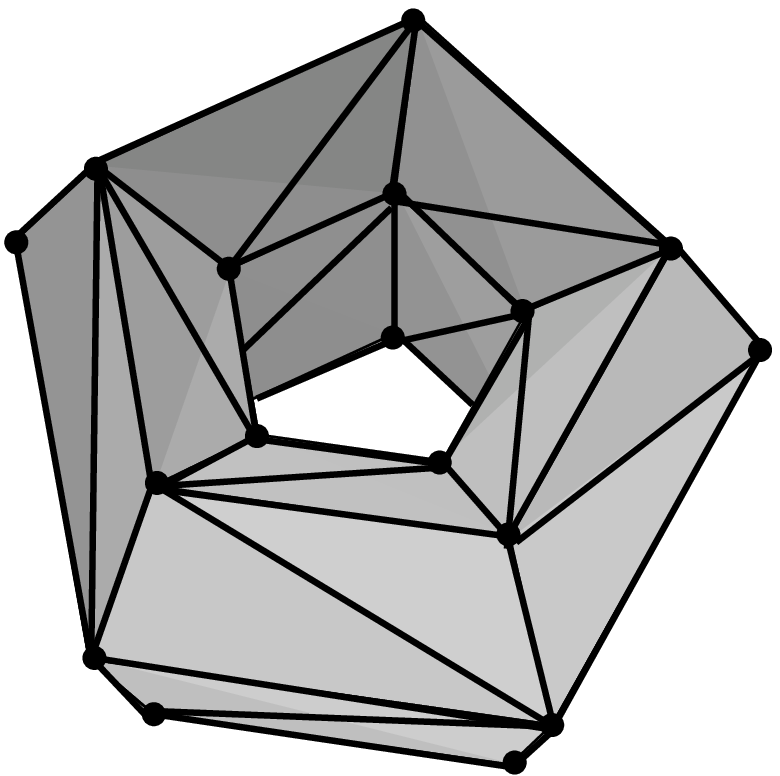}}
	\label{fig:torus}} \\
\subfloat[Two hollow tetrahedra glued along a $1$-face]{\makebox[7cm]{
	\includegraphics[height=.9in]{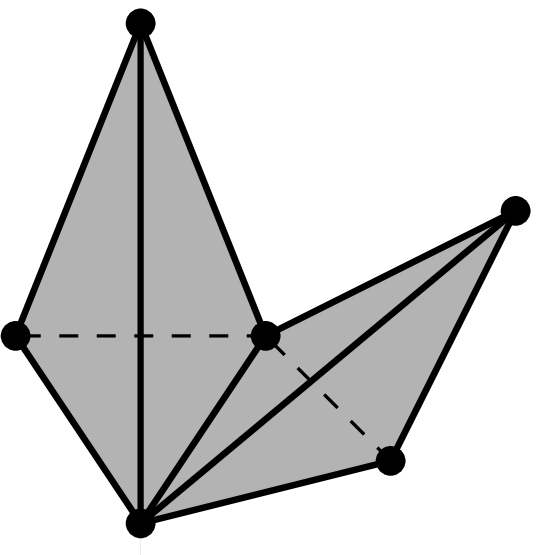}}
	\label{fig:two_tetra}}\qquad
\subfloat[A triangulation of the sphere pinched along a $1$-face]{\makebox[7cm]{
	\includegraphics[height=1in]{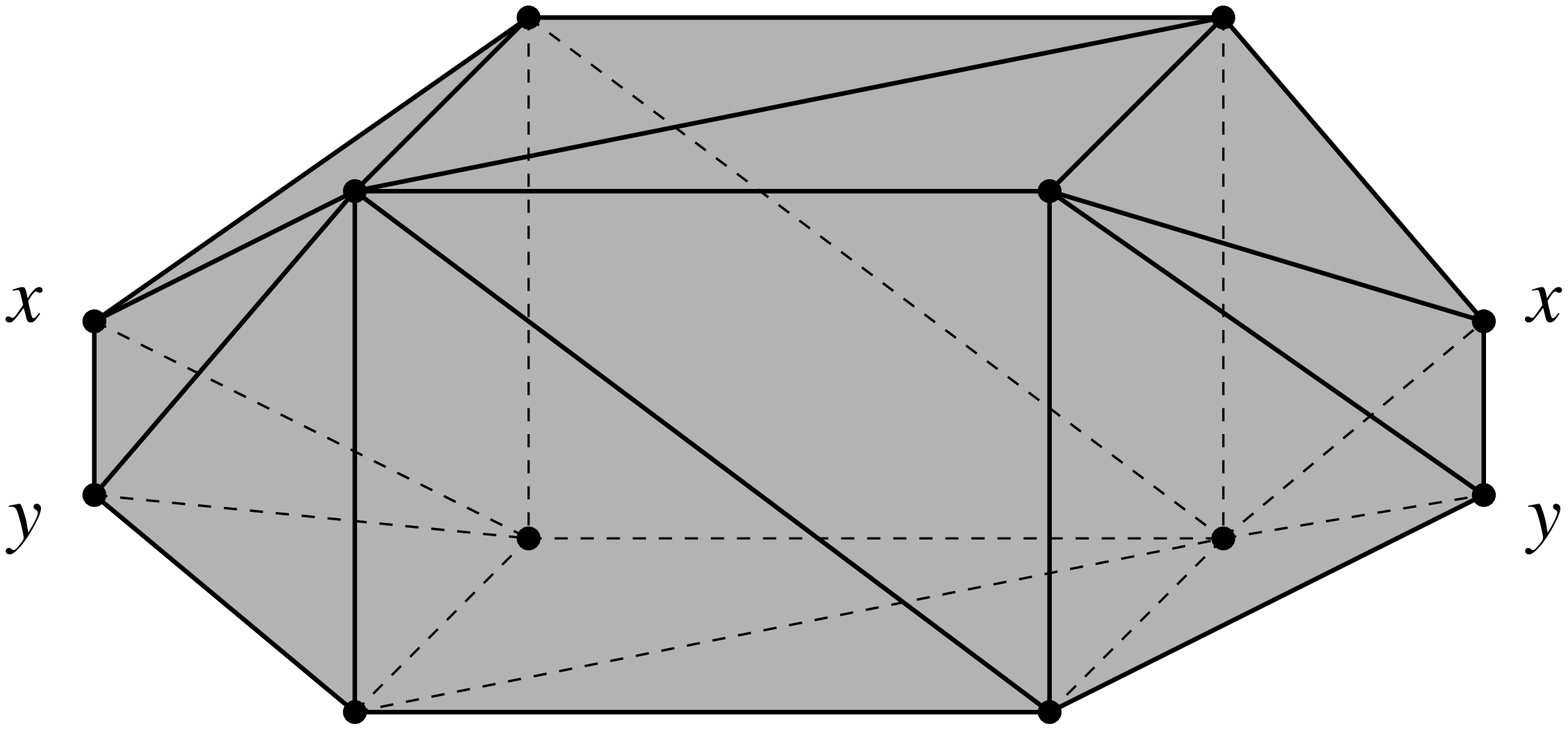}}
	\label{fig:pinched_sphere}}
\caption{Examples of $2$-dimensional cycles.} \label{fig:2d_cycles}
\end{figure}

Notice that a $d$-dimensional cycle has only one $d$-path-connected component.  Note also that the notion of a $d$-dimensional cycle is distinct from other versions of higher-dimensional cycles such as the {\bf Berge cycle} \cite{Berge89} and the {\bf simplicial cycle} \cite{CabFar11}, \cite{CFS11}.

The idea of a $d$-dimensional cycle is similar to the concept of a pseudo-manifold from algebraic topology (see for example~\cite{Munk84}). A {\bf pseudo $d$-manifold} is a pure $d$-dimensional $d$-path-connected simplicial complex in which every $(d-1)$-face is contained in exactly two $d$-faces.

\begin{definition}[{\bf orientable $d$-dimensional cycle}] \label{def:orient_ddim_cycle}
We say that a $d$-dimensional cycle $\Omega$ with $d$-faces $F_1,\ldots,F_k$ is {\bf orientable} if the following condition holds.
There exists a choice of orientations of $F_1,\ldots,F_k$ such that for any $(d-1)$-face $f$ of $\Omega$ when we consider the induced orientations of $f$ by the $F_i$'s containing $f$, these induced orientations are divided equally between the two orientation classes.  Otherwise we say that $\Omega$ is {\bf non-orientable}.
\end{definition}

Note that when we refer to the oriented $d$-faces of an orientable $d$-dimensional cycle we mean any set of orientations that is compatible with Definition \ref{def:orient_ddim_cycle}.

The $2$-dimensional cycles given in Figure \ref{fig:2d_cycles} are all orientable except for \ref{fig:real_proj_plane}, the triangulation of the real projective plane, which is non-orientable.


\subsection{Structure of $d$-dimensional cycles} \label{sec:struc_ddim_cycles}

\begin{definition}[{\bf face-minimal}]
A $d$-dimensional cycle $\Omega$ is called {\bf face-minimal} if there is no $d$-dimensional cycle on a strict subset of the $d$-faces of $\Omega$.
\end{definition}

It is not hard to see that a $1$-dimensional cycle is a graph cycle if and only if it is face-minimal.

\begin{definition}[{\bf vertex-minimal}]
A $d$-dimensional cycle $\Omega$ in a simplicial complex $\Gamma$ is called {\bf vertex-minimal} if there is no $d$-dimensional cycle in $\Gamma$ on a strict subset of the vertices of $\Omega$.
\end{definition}

The $2$-dimensional cycles in Figure \ref{fig:2d_cycles} are all face-minimal and vertex-minimal except for \ref{fig:two_tetra}.  This $2$-dimensional cycle is neither face-minimal nor vertex-minimal because it contains two $2$-dimensional cycles on strict subsets of $2$-faces and vertices each of which is a hollow tetrahedron.  The first author shows in~\cite{Con13} that a pseudo $d$-manifold is a face-minimal $d$-dimensional cycle.  The converse does not hold as we can see from Figure \ref{fig:pinched_sphere} since in that simplicial complex the face $\{x,y\}$ belongs to four distinct $2$-faces.

\begin{remark}
Notice that the face-minimality of a $d$-dimensional cycle $\Omega$ is not affected by whether or not it sits inside a larger simplicial complex.  On the other hand a $d$-dimensional cycle can be vertex-minimal when considered as a stand-alone simplicial complex, but not vertex-minimal when considered as a subcomplex of another simplicial complex.  As an example consider the simplicial complex in Figure \ref{fig:sphere_tetra}.  The outer sphere is not vertex-minimal in this complex as there exists another $2$-dimensional cycle on a strict subset of its vertices.  However, when considered as a simplicial complex on its own, the outer sphere is a vertex-minimal $2$-dimensional cycle.
\end{remark}

\begin{figure}[h!]
{\centering
    \includegraphics[height=1.2in]{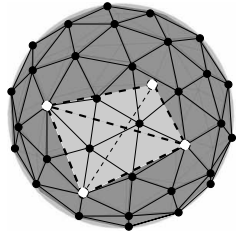}
    \caption {Triangulated sphere with suspended tetrahedron.} \label{fig:sphere_tetra}

}
\end{figure}

We can specialize the concepts of face-minimality and vertex-minimality to the case of orientable cycles in the following way.

\begin{definition}[{\bf orientably-face-minimal}]
An orientable $d$-dimensional cycle is called {\bf orientably-face-minimal} if there is no orientable $d$-dimensional cycle on a strict subset of its $d$-faces.
\end{definition}

\begin{definition}[{\bf orientably-vertex-minimal}]
An orientable $d$-dimensional cycle in a simplicial complex $\Gamma$ is called {\bf orientably-vertex-minimal} if there is no orientable $d$-dimensional cycle in $\Gamma$ on a strict subset of its vertices.
\end{definition}

It is easy to see that an orientable $d$-dimensional cycle can be orientably-vertex-minimal without being vertex-minimal.  It is not clear however, whether an orientable $d$-dimensional cycle can be orientably-face-minimal without being face-minimal.

The following lemma from \cite{Con13} demonstrates that every $d$-dimensional cycle can be decomposed into cycles which are face-minimal.

\begin{lem}[Connon~\cite{Con13} {\rm{\bf A $d$-dimensional cycle can be broken into face-minimal ones}}] \label{lem:cycle_decomp}
Any $d$-dimensional cycle $\Omega$ can be partitioned into face-minimal $d$-dimensional cycles $\Phi_1,\ldots,\Phi_n$.  In other words every $d$-face of $\Omega$ belongs to some $\Phi_i$ and no two distinct cycles $\Phi_i$ and $\Phi_j$ share a $d$-face.
\end{lem}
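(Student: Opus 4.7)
My plan is a strong induction on the number of $d$-faces of $\Omega$. The base case is automatic: if $\Omega$ is already face-minimal (which must happen once $\Omega$ has sufficiently few $d$-faces, e.g.\ the minimum possible for a $d$-dimensional cycle), then $\{\Omega\}$ itself is the desired partition. For the inductive step, suppose $\Omega$ is not face-minimal; then by definition there is a $d$-dimensional cycle $\Omega'$ whose $d$-faces form a strict subset of the $d$-faces of $\Omega$. Let $\Omega''$ denote the pure $d$-dimensional subcomplex of $\Omega$ generated by the $d$-faces of $\Omega$ that are not $d$-faces of $\Omega'$.

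The heart of the argument is to show that every $d$-path-connected component of $\Omega''$ is itself a $d$-dimensional cycle. Fix a component $C$ and a $(d-1)$-face $f \in C$. Any $d$-face of $\Omega''$ containing $f$ must lie in $C$, since two $d$-faces sharing a $(d-1)$-face are trivially $d$-path-connected. Hence the number of $d$-faces of $C$ containing $f$ equals the number of $d$-faces of $\Omega''$ containing $f$, which in turn equals
\[
    \#\{F \in \facets(\Omega): f \subseteq F\} - \#\{F \in \facets(\Omega'): f \subseteq F\}.
\]
Both terms on the right are even: the first by the cycle condition on $\Omega$, and the second by the cycle condition on $\Omega'$ (using the convention that the count is $0$ if $f \notin \Omega'$). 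So the difference is even, and since $C$ is $d$-path-connected by construction, $C$ is a $d$-dimensional cycle.

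Now both $\Omega'$ and each component of $\Omega''$ have strictly fewer $d$-faces than $\Omega$, so the induction hypothesis applies to each of them and yields partitions into face-minimal $d$-dimensional cycles. Since the $d$-faces of $\Omega'$ together with the $d$-faces of the components of $\Omega''$ exhaust the $d$-faces of $\Omega$ without overlap, concatenating these partitions produces the required decomposition $\Phi_1, \ldots, \Phi_n$ of $\Omega$.

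The main obstacle is the verification that removing a sub-cycle preserves the evenness condition component-wise; the subtlety is that $\Omega''$ may well be disconnected in the $d$-path sense and may even have $(d-1)$-faces that were not $(d-1)$-faces of $\Omega'$, so one must carefully argue that restricting attention to a single $d$-path-connected component does not change the relevant count. The observation that $d$-faces sharing a $(d-1)$-face lie in the same $d$-path-connected component, noted immediately after the definition of $d$-path-connected components, is exactly what makes this work.
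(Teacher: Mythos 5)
Your proof is correct and complete: the key step --- that each $d$-path-connected component of the complex generated by $\facets(\Omega)\setminus\facets(\Omega')$ inherits the evenness condition at every $(d-1)$-face, because two $d$-faces sharing a $(d-1)$-face necessarily lie in the same component --- is exactly the point that needs care, and you handle it properly before inducting on the number of $d$-faces. The paper itself does not reprove this lemma (it is quoted from \cite{Con13}), but your argument is the natural one for this statement and has no gaps.
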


Of importance to us is the $d$-dimensional $d$-complete complex on $n$ vertices.  We denote this complex by $\Lambda_n^d$.

\begin{example}
The hollow tetrahedron $\Lambda_4^2$ is shown in Figure \ref{fig:hollow_tetra}.  It is the boundary of a $3$-simplex and as we see in the following lemma it is the $2$-dimensional cycle on the smallest number of vertices.
\end{example}

\begin{prop} [Connon~\cite{Con13} {\rm{\bf The smallest $d$-dimensional cycle is a complete one}}] \label{prop:smallest_cycle}
The smallest number of vertices that a $d$-dimensional cycle can have is $d+2$ and the only $d$-dimensional cycle on $d+2$ vertices is $\Lambda_{d+2}^{d}$.  In addition, $\Lambda_{d+2}^d$ is orientable.
\end{prop}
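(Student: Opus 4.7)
The plan is to argue a lower bound of $d+2$ vertices, then classify the cycles achieving it, and finally exhibit an orientation of $\Lambda_{d+2}^d$ via the boundary of a simplex.

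First, I would rule out fewer than $d+2$ vertices. A $d$-face uses $d+1$ vertices, so on $d+1$ vertices there is at most one possible $d$-face, the full $d$-simplex; every $(d-1)$-subface of this simplex lies in exactly one $d$-face, violating the even-count condition. Hence any $d$-dimensional cycle has at least $d+2$ vertices.

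Next, suppose $\Omega$ is a $d$-dimensional cycle on the vertex set $V = \{v_1,\ldots,v_{d+2}\}$. Every $(d+1)$-subset of $V$ has the form $F_i := V \setminus \{v_i\}$, and every $(d-1)$-subset has the form $V \setminus \{v_i,v_j\}$ with $i<j$; the latter is contained in exactly the two $d$-faces $F_i$ and $F_j$ of $\Lambda_{d+2}^d$. Let $I \subseteq \{1,\ldots,d+2\}$ index the $d$-faces of $\Omega$, so $\facets(\Omega) = \{F_i : i \in I\}$. For any $(d-1)$-face $V \setminus \{v_i,v_j\} \in \Omega$, the number of $d$-faces of $\Omega$ containing it is $|I \cap \{i,j\}|$, which by purity is at least $1$ and by the cycle condition must be even, hence equal to $2$. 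Now if $I \neq \{1,\ldots,d+2\}$, pick $k \notin I$ and any $j \in I$; then $V \setminus \{v_k,v_j\}$ is a $(d-1)$-face of $\Omega$ (it lies in $F_j$) but is contained in only the one $d$-face $F_j$ of $\Omega$, a contradiction. Therefore $I = \{1,\ldots,d+2\}$ and $\Omega = \Lambda_{d+2}^d$. A quick check that $\Lambda_{d+2}^d$ is itself a $d$-dimensional cycle finishes the uniqueness: any two of its $d$-faces share $d$ vertices and so form a $d$-path, and each $(d-1)$-face lies in exactly $2$ of the $d$-faces.

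Finally, for orientability, I would use the boundary of the $(d+1)$-simplex $\sigma = [v_1,\ldots,v_{d+2}]$. Define an orientation of each $F_i$ by the $d$-chain
\[
c = \sum_{i=1}^{d+2} (-1)^{i-1}\,[v_1,\ldots,\widehat{v_i},\ldots,v_{d+2}] = \partial_{d+1}\sigma.
\]
Since $\partial_d \partial_{d+1}\sigma = 0$, for each $(d-1)$-face $f$ of $\Lambda_{d+2}^d$ the two induced orientations coming from the two $d$-faces containing $f$ cancel in $\partial_d c$, i.e., they lie in opposite orientation classes. This is exactly the condition of Definition~\ref{def:orient_ddim_cycle}, so $\Lambda_{d+2}^d$ is orientable.

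The main obstacle is the classification step: one has to be careful that the counting argument uses both the evenness condition and the fact that on $d+2$ vertices each $(d-1)$-face lies in at most two $(d+1)$-subsets, forcing the count to be exactly $2$; the rest is bookkeeping and a standard appeal to $\partial^2 = 0$.
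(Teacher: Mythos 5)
Your proof is correct and complete. The paper states Proposition \ref{prop:smallest_cycle} without proof, importing it from \cite{Con13}, so there is no internal argument to compare against; your three steps --- the parity obstruction on at most $d+1$ vertices, the counting argument forcing $I=\{1,\ldots,d+2\}$ (together with the check that $\Lambda_{d+2}^d$ really is a $d$-dimensional cycle), and orientability via $\partial_d\partial_{d+1}\sigma=0$ --- are the natural route and each is carried out correctly. The one point worth making explicit is the sign bookkeeping at the end: under the paper's combinatorial definition of induced orientation, the term of $\partial_d$ obtained by deleting a vertex is the \emph{negative} of the induced orientation of that subface, so the cancellation of the two contributions of each $(d-1)$-face in $\partial_d c$ is indeed equivalent to the two induced orientations lying in opposite classes, which is exactly the condition of Definition \ref{def:orient_ddim_cycle}.
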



\section{$d$-chorded and $d$-cycle-complete complexes} \label{sec:dchorded_complexes}

In a chordal graph any cycle can be ``broken down'' into a set of complete cycles, or triangles, on the same vertex set, often in more than one way.  The clique complex of a chordal graph from Section \ref{sec:F_orig_thm} ``fills in'' these cycles by turning each such triangle into a face of the complex.  The original cycle can be thought of as a kind of sum of these triangles.  In this way, all $1$-dimensional homology existing in the chordal graph when it is thought of as a simplicial complex disappears in the clique complex, as all $1$-cycles are transformed into $1$-boundaries.

We would like to replicate this dismantling of a cycle into smaller
complete pieces in higher dimensions.  This is the motivation behind
the ideas of a {\bf $d$-chorded simplicial complex} and a {\bf $d$-cycle-complete-complex}.

In the case of chordal graphs, cycles that are not complete must have
a chord.  This chord breaks the cycle into two cycles both having
fewer vertices than the original.  It is the inductive nature of this
addition of chords which results in each cycle being dismantled into
complete cycles.  To achieve this same goal in $d$-dimensional cycles
we introduce the higher-dimensional notion of a {\bf chord set}.

\begin{definition}[{\bf chord set}]
Let $\Omega$ be a $d$-dimensional cycle in a simplicial complex $\Gamma$.  A {\bf chord set} of $\Omega$ in $\Gamma$ is a set $C$ of $d$-faces of $\Gamma\setminus \Omega$ contained in $V(\Omega)$ such that the simplicial complex $\langle C , \facets(\Omega) \rangle$ consists of $k$ $d$-dimensional cycles, $\Omega_1,\ldots,\Omega_k$, where $k \geq 2$ with the following conditions:
\begin{enumerate}
\item $\bigcup_{i=1}^k \facets(\Omega_i) = \facets(\Omega) \cup C$,
\item each $d$-face in $C$ is contained in an even number of the cycles $\Omega_1,\ldots,\Omega_k$,
\item each $d$-face of $\Omega$ is contained in an odd number of the cycles $\Omega_1,\ldots,\Omega_k$,
\item $|V(\Omega_i)|<|V(\Omega)|$ for $i=1,\ldots,k$.
\end{enumerate}
\end{definition}

A chord set of a graph cycle corresponds to a set of chords of the cycle in the traditional sense.  A chord of a graph cycle is always a chord set.  In Figure \ref{fig:graph_chord_set} we have a graph cycle on six vertices with a chord set displayed with dotted lines.  This cycle is broken into four smaller cycles by its chord set.

\begin{figure}[h!]
{\centering
    \includegraphics[height=.7in]{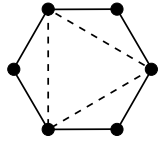}
    \caption {Example of a graph cycle with a chord set.} \label{fig:graph_chord_set}
}
\end{figure}

\begin{definition}[{\bf $d$-chorded}]
A pure $d$-dimensional simplicial complex $\Gamma$ is {\bf $d$-chorded} if all face-minimal $d$-dimensional cycles in $\Gamma$ that are not $d$-complete have a chord set in $\Gamma$.
\end{definition}

In the 1-dimensional case this definition says that a graph is $1$-chorded when all face-minimal cycles that are not $1$-complete have a chord set.  In other words, a graph is $1$-chorded when all graph cycles that are not triangles have a chord set.  This agrees with the usual notion of a chordal graph.

In Figure \ref{fig:2_chorded} we have examples of simplicial complexes that are $2$-chorded.  The hollow tetrahedron in Figure \ref{fig:hollow_tetra2} is $2$-chorded as it is a $2$-complete $2$-dimensional cycle.  The pure $2$-dimensional complex in Figure \ref{fig:chorded} is $2$-chorded because it contains no $2$-dimensional cycles.  The complexes in Figures \ref{fig:2tetra_chord} and \ref{fig:octahedron_chord} are $2$-chorded because they are face-minimal $2$-dimensional cycles with chord sets breaking the complexes into $2$-complete $2$-dimensional cycles.  The chord sets are shown in a darker colour.

\begin{figure}[h]
\centering
\subfloat[$2$-complete face-minimal $2$-dimensional cycle]{\makebox[7cm]{
            \includegraphics[height=1in]{hollow_tetra}}
            \label{fig:hollow_tetra2}} \qquad
\subfloat[Pure $2$-dimensional complex with no $2$-dimensional cycles]{\makebox[7cm]{
	\includegraphics[height=1in]{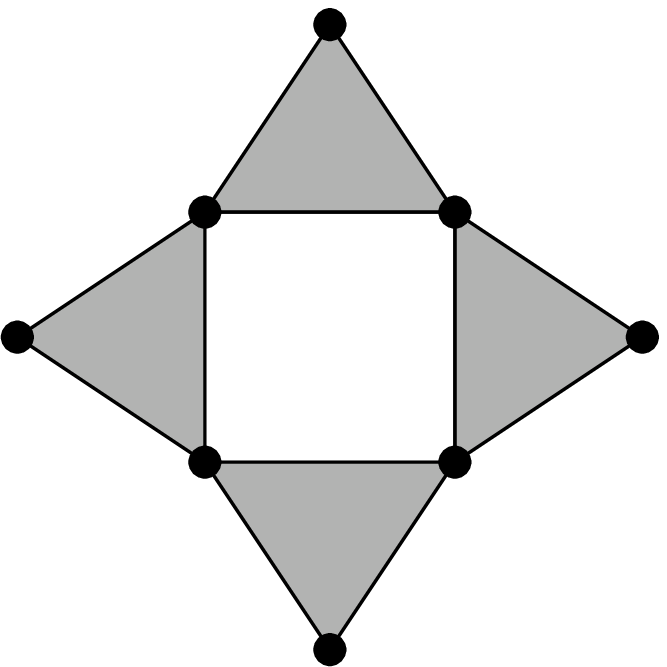}}
	\label{fig:chorded}} \\
\subfloat[Minimal $2$-dimensional cycle with chord set of size $1$]{\makebox[7cm]{
	\includegraphics[height=.65in]{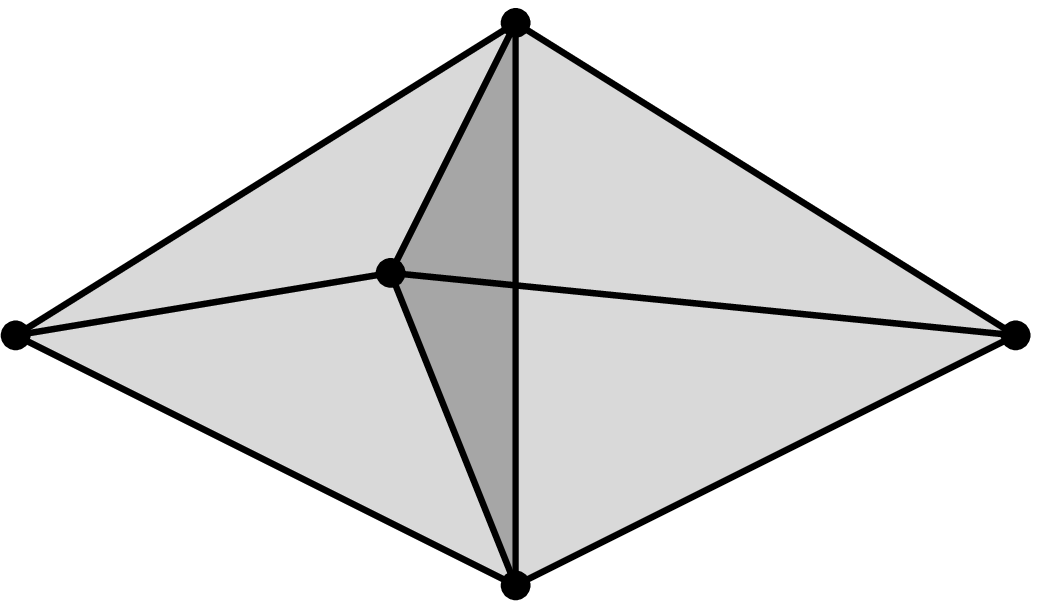}}
	\label{fig:2tetra_chord}} \qquad
\subfloat[Octahedron with chord set of size $4$]{\makebox[7cm]{
	\includegraphics[height=1.2in]{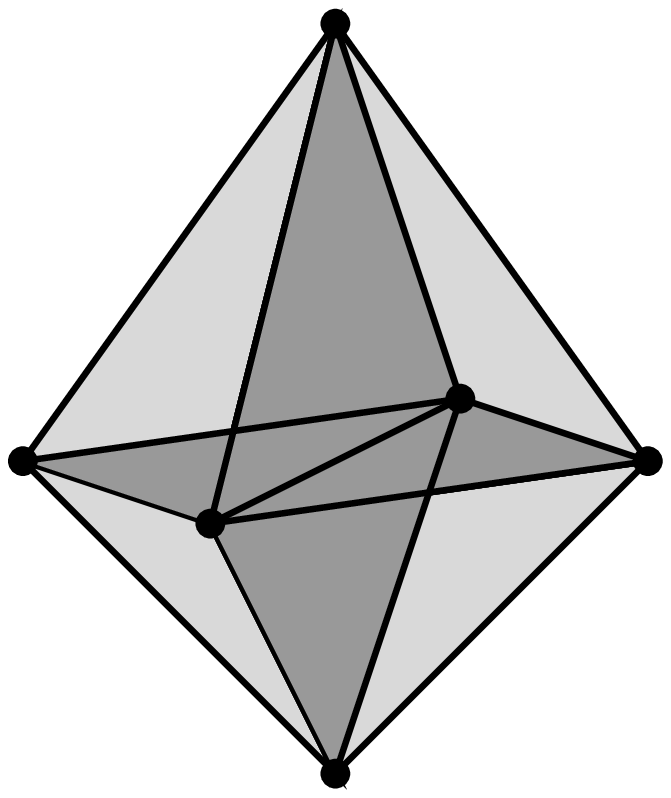}}
	\label{fig:octahedron_chord}}
\caption{Examples of $2$-chorded simplicial complexes.} \label{fig:2_chorded}
\end{figure}

\begin{remark} \label{rem:simplex_is_chorded}
It is not difficult to show that the pure $d$-skeleton of an $n$-simplex $\Gamma$ is $d$-chorded for any $d<n$.  The proof follows the same technique used in the proof of Theorem \ref{thm:one_dir} part 2 in Section \ref{sec:necessary_condition}.  As a starting point we need only notice that all induced subcomplexes of $\Gamma$ are also simplices and thus have reduced homology equal to zero in all dimensions.
\end{remark}

We will see in Section \ref{sec:necessary_condition} that our class of $d$-chorded complexes strictly contains the class of $(d+1)$-uniform {\bf chordal clutters} introduced by Woodroofe in \cite{Wood11} and the class of $(d+1)$-uniform {\bf generalized chordal hypergraphs} introduced by Emtander in \cite{Em10}.

We would like to make use of the homological characterization given in Theorem \ref{thm:Frob_hom_char} to extend Fr\"oberg's theorem to higher dimensions.  For this purpose, we require the property of being $d$-chorded to be transferred to induced subcomplexes.

\begin{lem} \label{lem:induced_complexes_chorded}
The pure $d$-skeleton of any induced subcomplex of a $d$-chorded simplicial complex is $d$-chorded.
\end{lem}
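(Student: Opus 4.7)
The plan is to lift any potentially problematic cycle in the pure $d$-skeleton of an induced subcomplex back to the ambient $d$-chorded complex, apply the hypothesis there, and then observe that the chord set produced is automatically supported on the induced subcomplex. What has to be checked is that the relevant notions — face-minimality and chord set — transfer cleanly between the two settings.

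Concretely, I would fix $W \subseteq V(\Gamma)$ and set $\Delta := (\Gamma_W)^{[d]}$, which is a pure $d$-dimensional subcomplex of the pure $d$-dimensional complex $\Gamma$. Let $\Omega$ be a face-minimal $d$-dimensional cycle in $\Delta$ that is not $d$-complete. Since $\Delta$ sits inside $\Gamma$, $\Omega$ is still a $d$-dimensional cycle in $\Gamma$. Face-minimality of $\Omega$ is intrinsic, as the remark in Section \ref{sec:struc_ddim_cycles} notes: the defining condition quantifies only over sub-cycles on a strict subset of the $d$-faces of $\Omega$ itself, with no reference to the ambient complex. Likewise, whether $\Omega$ is $d$-complete depends only on $\Omega$. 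Therefore $\Omega$ is a face-minimal, non-$d$-complete $d$-dimensional cycle in $\Gamma$, and the $d$-chorded hypothesis produces a chord set $C$ of $\Omega$ in $\Gamma$.

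It remains to check that $C$ is also a chord set of $\Omega$ in $\Delta$ rather than merely in $\Gamma$. By the definition of chord set, every $d$-face in $C$ has all its vertices in $V(\Omega)$, and since $\Omega \subseteq \Delta$ we have $V(\Omega) \subseteq W$. Hence each face of $C$ lies in $\Gamma_W$, and being of dimension $d$, in $\Delta$. The four enumerated conditions in the definition of chord set concern only the internal structure of the complex $\langle C, \facets(\Omega) \rangle$, which is identical whether viewed inside $\Delta$ or inside $\Gamma$. So $C$ witnesses $d$-chordedness of $\Delta$ at $\Omega$.

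The only genuine obstacle is the ``intrinsicness'' observations for face-minimality and for the chord-set conditions. Both are essentially immediate from the definitions — nothing in either refers to $d$-faces outside $\Omega \cup C$ — but it is worth noting them explicitly so that the passage between $\Gamma$ and $\Delta$ is clearly legitimate.
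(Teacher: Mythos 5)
Your proposal is correct and follows essentially the same route as the paper: lift the face-minimal non-$d$-complete cycle to $\Gamma$, note that face-minimality and $d$-completeness are intrinsic to the cycle, obtain a chord set there, and observe that the chord set lives on $V(\Omega)\subseteq W$ and hence already in $(\Gamma_W)^{[d]}$. Your explicit check that the four chord-set conditions depend only on $\langle C,\facets(\Omega)\rangle$ is a slightly more careful version of the paper's one-line remark to the same effect.
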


\begin{proof}
Let $\Gamma$ be a $d$-chorded simplicial complex and let $W \subseteq V(\Gamma)$.  Let $\Omega$ be any face-minimal $d$-dimensional cycle in $(\Gamma_W)^{[d]}$ that is not $d$-complete.  It is clear that $\Omega$ is also a $d$-dimensional cycle in $\Gamma$.  Also, $\Omega$ must be face-minimal in $\Gamma$ otherwise some strict subset of its $d$-faces is a $d$-dimensional cycle in $\Gamma$, but then also in $(\Gamma_W)^{[d]}$, which is a contradiction.  Hence since $\Gamma$ is $d$-chorded $\Omega$ has a chord set in $\Gamma$.  Since a chord set is a set of $d$-faces that lie on the same vertex set as the cycle, this chord set exists in $\Gamma_W$ as well.  Hence $\Omega$ has a chord set in $(\Gamma_W)^{[d]}$.  Thus every face-minimal $d$-dimensional cycle in $(\Gamma_W)^{[d]}$ that is not $d$-complete has a chord set.  Therefore $(\Gamma_W)^{[d]}$ is $d$-chorded.
\end{proof}

A chordal graph can also be defined without the notion of chords by requiring that all of its ``minimal'' cycles be complete.  We can extend this definition to higher dimensions in the following way.

\begin{definition}[{\bf $d$-cycle-complete, orientably-$d$-cycle-complete}]
A pure $d$-dimensional simplicial complex $\Gamma$ is called ({\bf orientably-}) {\bf $d$-cycle-complete} if all of its (orientably-) vertex-minimal $d$-dimensional cycles are $d$-complete.
\end{definition}

The examples in Figure \ref{fig:2_chorded} are all both $2$-cycle-complete and orientably-$2$-cycle-complete.  It is not hard to see that the set of $1$-cycle-complete and orientably-$1$-cycle-complete simplicial complexes corresponds exactly to the set of chordal graphs.

We have imposed structure on our classes of $d$-chorded and $d$-cycle-complete complexes by restricting the way in which higher-dimensional cycles may exist in these complexes.  A more severe restriction is to disallow these higher-dimensional cycles altogether on a particular level.

\begin{definition}[{\bf $d$-dimensional tree}]
A {\bf $d$-dimensional tree} is a pure $d$-dimensional simplicial complex with no $d$-dimensional cycles.
\end{definition}

Notice that the notion of a graph tree agrees with that of a $1$-dimensional tree.  It is also trivial to see that all $d$-dimensional trees are $d$-chorded.

Another higher-dimensional analogue to the graph tree is the {\bf simplicial tree} which is a connected simplicial complex with no simplicial cycles (see \cite{CabFar11}, \cite{Far02}).  It is not difficult to show that a pure $d$-dimensional simplicial tree is a $d$-dimensional tree as one can easily show that any $d$-dimensional cycle contains a simplicial cycle.

\begin{prop} [{\bf $d$-chorded $\Rightarrow$ $d$-cycle-complete $\Rightarrow$ orientably-$d$-cycle-complete}] \label{prop:classes_nested} \mbox{}
\begin{enumerate}
\item Any $d$-chorded simplicial complex is $d$-cycle-complete.
\item Any $d$-cycle-complete simplicial complex is orientably-$d$-cycle-complete.
\end{enumerate}
\end{prop}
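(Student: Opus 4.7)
My plan is to handle both implications with the same two-step strategy: starting from a cycle $\Omega$ that is (orientably-)vertex-minimal in $\Gamma$, produce an auxiliary $d$-cycle sitting inside $V(\Omega)$ which the hypothesis forces to be $d$-complete, and then squeeze $\Omega$ itself to coincide with that complete cycle using Proposition~\ref{prop:smallest_cycle}, which says that the only $d$-dimensional cycle on $d+2$ vertices is $\Lambda_{d+2}^d$ and that this complex is orientable.

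For part~(1), I would take $\Phi$ to be a $d$-cycle subcomplex of $\Omega$ with the fewest facets (such a $\Phi$ exists since $\Omega$ itself is a candidate). This choice makes $\Phi$ face-minimal, and since $V(\Phi) \subseteq V(\Omega)$, vertex-minimality of $\Omega$ forces $V(\Phi) = V(\Omega)$. If $\Phi$ were not $d$-complete, $d$-chordedness of $\Gamma$ would furnish a chord set decomposing $\Phi$ into $d$-cycles $\Omega_1, \ldots, \Omega_k$ in $\Gamma$ with $|V(\Omega_i)| < |V(\Phi)| = |V(\Omega)|$, contradicting vertex-minimality of $\Omega$. Hence $\Phi = \Lambda_n^d$ for $n = |V(\Phi)|$; but when $n > d+2$ the subcomplex of $\Lambda_n^d$ spanned by any $(d+2)$-vertex subset is $\Lambda_{d+2}^d$, itself a $d$-cycle using strictly fewer facets, which contradicts face-minimality of $\Phi$. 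Therefore $n = d+2$, and Proposition~\ref{prop:smallest_cycle} identifies $\Omega$ with the unique $d$-cycle on $d+2$ vertices, $\Lambda_{d+2}^d$, which is $d$-complete.

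For part~(2), I would choose $\Psi$ among the $d$-cycles of $\Gamma$ with vertex set contained in $V(\Omega)$ to have the minimum number of vertices. Then $\Psi$ is vertex-minimal in $\Gamma$, so the hypothesis makes $\Psi$ $d$-complete, say $\Psi = \Lambda_n^d$; but for $n > d+2$, $\Lambda_n^d$ contains $\Lambda_{d+2}^d$ on a strictly smaller vertex set (again a $d$-cycle in $\Gamma$), contradicting vertex-minimality of $\Psi$. So $\Psi = \Lambda_{d+2}^d$, which by Proposition~\ref{prop:smallest_cycle} is orientable. If $V(\Psi)$ were a strict subset of $V(\Omega)$, $\Psi$ would be an orientable $d$-cycle in $\Gamma$ on fewer vertices than $\Omega$, violating the fact that $\Omega$ is orientably-vertex-minimal. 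Hence $|V(\Omega)| = d+2$, and a final application of Proposition~\ref{prop:smallest_cycle} gives $\Omega = \Lambda_{d+2}^d$.

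The main subtlety to keep straight, and the reason the two parts cannot be collapsed into a single direct appeal to the hypothesis on $\Omega$, is that in part~(2) an orientably-vertex-minimal cycle need not be vertex-minimal, so $d$-cycle-completeness cannot be invoked on $\Omega$ itself. The bridge is that every vertex-minimal $d$-complete cycle must have exactly $d+2$ vertices and is therefore automatically orientable; this latent orientability of the auxiliary cycle $\Psi$ is what allows it to be played back against the orientable-vertex-minimality hypothesis on $\Omega$.
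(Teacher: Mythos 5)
Your proof is correct and follows essentially the same route as the paper's: part (1) passes to a face-minimal subcycle and uses the chord-set condition $|V(\Omega_i)|<|V(\Omega)|$ to contradict vertex-minimality, while part (2) uses the orientability of $\Lambda_{d+2}^d$ from Proposition~\ref{prop:smallest_cycle} to turn a vertex-minimal $d$-complete cycle into an orientable one that can be played against orientable-vertex-minimality. The only real difference is organizational: you replace the paper's case analyses with extremal choices (fewest facets, fewest vertices), which incidentally pins down the minimal cycles as exactly $\Lambda_{d+2}^d$.
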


\begin{proof}
~\begin{enumerate}
\item Let $\Gamma$ be a $d$-chorded simplicial complex and let $\Omega$ be any vertex-minimal $d$-dimensional cycle in $\Gamma$.  Suppose that $\Omega$ is not $d$-complete.  If $\Omega$ is face-minimal then $\Gamma$ contains a chord set for $\Omega$ which means that there exist $d$-dimensional cycles on strict subsets of the vertices of $\Omega$.  If $\Omega$ is not face-minimal then it contains a face-minimal $d$-dimensional cycle on its $d$-faces which has a chord set.  This also implies that there exist $d$-dimensional cycles on strict subsets of the vertices of $\Omega$.  Either way we have a contradiction to vertex-minimality of $\Omega$ and so $\Omega$ must be $d$-complete.  Hence $\Gamma$ is $d$-cycle-complete.

\item Let $\Gamma$ be a $d$-cycle-complete simplicial complex and let $\Omega$ be any orientably-vertex-minimal $d$-dimensional cycle in $\Gamma$.  We know that $\Omega$ does not contain any orientable $d$-dimensional cycles on a strict subset of its vertices.  If it also does not contain any non-orientable $d$-dimensional cycles on a strict subset of its vertices then it is vertex-minimal and so $d$-complete since $\Gamma$ is $d$-cycle complete.  Thus suppose that $\Omega$ contains a non-orientable cycle on a strict subset of its vertices.  If this non-orientable cycle is vertex-minimal then it is $d$-complete which means that it contains a copy of $\Lambda_{d+2}^d$, an orientable $d$-dimensional cycle on a strict subset of the vertices of $\Omega$.  This is a contradiction to the fact that $\Omega$ is orientably-vertex-minimal.  If the non-orientable cycle is not vertex-minimal then it must contain a $d$-dimensional cycle on a strict subset of its vertices which is vertex-minimal.  This cycle is $d$-complete since $\Gamma$ is $d$-cycle-complete and so contains a copy of $\Lambda_{d+2}^d$.  As before we have a contradiction and so $\Omega$ does not contain any non-orientable $d$-dimensional cycles on its vertex set.  Hence $\Omega$ is vertex-minimal and so $d$-complete since $\Gamma$ is $d$-cycle-complete.  Thus $\Gamma$ is orientably-$d$-cycle-complete.
\end{enumerate}
\end{proof}

By Proposition \ref{prop:classes_nested} we can see that the four classes of simplicial complexes defined in this section are ``nested'' with
\[
\begin{matrix}
    \{\mbox{$d$-dimensional trees}\} \cr
    \begin{rotate}{270}
        \Large $\subsetneq$
    \end{rotate}
    \cr \cr
    \{\mbox{$d$-chorded simplicial complexes}\} \cr
    \begin{rotate}{270}
        \Large $\subsetneq$
    \end{rotate}
    \cr \cr
    \{\mbox{$d$-cycle-complete simplicial complexes}\}  \cr
    \begin{rotate}{270}
     \Large  $\subsetneq$
    \end{rotate} \cr \cr
    \{\mbox{orientably-$d$-cycle-complete simplicial complexes}\}. \cr
\end{matrix}
\]

These inclusions are strict.  The simplicial complex $\Lambda_{d+2}^d$ is $d$-chorded but not a $d$-dimensional tree.  An example of a $2$-cycle-complete complex which is not $2$-chorded is given in Figure \ref{fig:sphere_tetra} in Section \ref{sec:struc_ddim_cycles}.  This complex is not $2$-chorded because it contains a face-minimal $2$-dimensional cycle, the triangulated sphere, which is not $2$-complete and has no chord set.  The triangulation of the real projective plane given in Figure \ref{fig:real_proj_plane} is an example of an orientably-$2$-cycle-complete complex which is not $2$-cycle-complete.  It is orientably-$2$-cycle-complete since it contains no orientable $2$-dimensional cycles, but it is not $2$-cycle-complete because it contains a vertex-minimal $2$-dimensional cycle that is not $2$-complete.


\section{Simplicial homology of $d$-dimensional cycles and related structures}\label{sec:homology_ddim_cycles}

As demonstrated by the first author in \cite{Con13}, the presence of a $d$-dimensional cycle in a simplicial complex has implications for the simplicial homology of that complex.  We see in the next proposition that, over $\Z_2$, $d$-dimensional cycles naturally arise as the support complexes of homological $d$-cycles.

\begin{prop} [Connon~\cite{Con13} {\rm{\bf $d$-dimensional cycles are $d$-cycles and conversely}}] \label{prop:ddimcycle_is_dcycle}
The sum of the $d$-faces of a $d$-dimensional cycle is a homological $d$-cycle over $\Z_2$ and, conversely, the $d$-path-connected components of the support complex of a homological $d$-cycle are $d$-dimensional cycles.
\end{prop}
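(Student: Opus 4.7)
The plan is to reduce both directions to a single parity observation: over $\Z_2$, the signs in $\partial_d$ collapse, so for a chain $c = \sum F_j$ with distinct $d$-faces $F_j$ (all coefficients equal to $1$), the coefficient of each $(d-1)$-face $f$ in $\partial_d(c)$ is simply the number of $F_j$ containing $f$, reduced modulo $2$. Thus $\partial_d(c) = 0$ in $C_{d-1}(\Gamma;\Z_2)$ if and only if every $(d-1)$-face of the support complex of $c$ is contained in an even number of $d$-faces of that support complex.

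For the forward direction, given a $d$-dimensional cycle $\Omega$ with facets $F_1,\ldots,F_k$, I would set $c = \sum_{i=1}^{k} F_i \in C_d(\Omega;\Z_2)$ and apply the parity observation above. Every $(d-1)$-subface of any $F_i$ already lies in $\Omega$ (since $\Omega$ is a simplicial complex), and condition (2) in the definition of a $d$-dimensional cycle guarantees that each such $(d-1)$-face is contained in an even number of facets of $\Omega$. Therefore $\partial_d(c) = 0$, i.e., $c$ is a homological $d$-cycle.

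For the converse, let $c$ be a homological $d$-cycle over $\Z_2$ with support complex $\Sigma$, and let $C$ be a $d$-path-connected component of $\Sigma$. Purity and $d$-path-connectedness of $C$ are built into the definition of a $d$-path-connected component, so only the parity condition on $(d-1)$-faces of $C$ needs verification. The crucial input here is the remark following the definition of $d$-path-connected components, namely that distinct components cannot share a $(d-1)$-face: any two $d$-faces of $\Sigma$ meeting in a common $(d-1)$-face $f$ constitute a $d$-path of length one, hence lie in a single component. Consequently, if $f$ is a $(d-1)$-face of $C$, then every $d$-face of $\Sigma$ containing $f$ already belongs to $C$, so the $\Sigma$-level parity supplied by $\partial_d(c) = 0$ transfers verbatim into a parity statement within $C$. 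The only subtlety — and the ``main obstacle'' — is this transfer from $\Sigma$ to the component $C$; once the ``no shared $(d-1)$-faces between components'' fact is invoked, both halves of the proof reduce to the same parity count.
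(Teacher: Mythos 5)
Your argument is correct and complete: the mod-$2$ parity reformulation of $\partial_d(c)=0$ handles the forward direction immediately, and the observation that two $d$-faces sharing a $(d-1)$-face lie in the same $d$-path-connected component (so the parity count over the support complex localizes to each component) is exactly the point needed for the converse. Note that the paper itself states this proposition as a citation to \cite{Con13} and gives no proof, so there is nothing to compare against here; your proof is the natural one and can stand as written.
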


The following theorem shows that over any field of characteristic $2$ the $d$-dimensional cycle is exactly the right notion to capture the property of non-zero homology.

\begin{thm} [Connon~\cite{Con13}] \label{thm:cycle_hom_z2}
For any simplicial complex $\Gamma$ and any field $k$ of characteristic $2$, $\tilde{H}_d(\Gamma;k) \neq 0$ if and only if $\Gamma$ contains a $d$-dimensional cycle, the sum of whose $d$-faces is not a $d$-boundary.
\end{thm}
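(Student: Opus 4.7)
The plan is to reduce the statement to the preceding Proposition \ref{prop:ddimcycle_is_dcycle}, which already matches the two directions modulo the distinction between $\Z_2$-coefficients and coefficients in an arbitrary field $k$ of characteristic $2$. First I would observe that over any field $k$ of characteristic $2$ a $d$-chain can be identified with its support, since every non-zero coefficient equals $1$. Consequently, the boundary computations over $k$ proceed exactly as over $\Z_2$, and the notion of ``sum of the $d$-faces of a subcomplex'' is unambiguous. I would then note (citing universal coefficients, or simply by inspection of the boundary matrix) that $\tilde{H}_d(\Gamma;k)=0$ iff $\tilde{H}_d(\Gamma;\Z_2)=0$; so it suffices to work over $\Z_2$.

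For the ``if'' direction, let $\Omega\subseteq\Gamma$ be a $d$-dimensional cycle whose sum of $d$-faces $z=\sum_{F\in\facets(\Omega)} F$ is not a $d$-boundary in $\Gamma$. By Proposition \ref{prop:ddimcycle_is_dcycle}, $z$ lies in $\ker\partial_d$, and by hypothesis $z\notin\im\partial_{d+1}$, so $[z]$ is a non-zero class in $\tilde{H}_d(\Gamma;k)$.

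For the ``only if'' direction, suppose $\tilde{H}_d(\Gamma;k)\neq 0$ and pick a $d$-cycle $z\in C_d(\Gamma)$ whose class is non-zero. Write $z$ as the sum of the $d$-faces in its support complex $S$, and let $\Omega_1,\ldots,\Omega_m$ denote the $d$-path-connected components of $S$. By Proposition \ref{prop:ddimcycle_is_dcycle}, each $\Omega_i$ is a $d$-dimensional cycle; moreover, setting $z_i=\sum_{F\in\facets(\Omega_i)} F$, each $z_i$ is itself a $d$-cycle over $k$ by the same proposition. The key (easy) step is the decomposition
\[
    z=z_1+z_2+\cdots+z_m,
\]
which holds because distinct $d$-path-connected components share no $d$-faces, so the supports of the $z_i$ are pairwise disjoint. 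Since $z\notin\im\partial_{d+1}$, at least one summand $z_i$ must also fail to be a $d$-boundary (otherwise their sum would be a boundary). The corresponding $\Omega_i$ is the required $d$-dimensional cycle.

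The only subtle point I anticipate is bookkeeping around the coefficient field: ensuring that the ``sum of the $d$-faces'' of $\Omega_i$ defined combinatorially agrees with the chain $z_i$ appearing in the decomposition of $z$, and that ``not a $d$-boundary'' is the same condition over $k$ as over $\Z_2$. Both points are straightforward because in characteristic $2$ a chain is literally determined by its set of faces, and boundary relations over $k$ are obtained from those over $\Z_2$ by extension of scalars, hence preserve (non-)vanishing.
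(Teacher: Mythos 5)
Your argument is correct, and it is the natural derivation of Theorem~\ref{thm:cycle_hom_z2} from Proposition~\ref{prop:ddimcycle_is_dcycle}; note that this paper does not prove the theorem itself but imports it from \cite{Con13}, so there is no in-paper proof to diverge from. Both halves of your argument are sound: the ``if'' direction is immediate from the first half of Proposition~\ref{prop:ddimcycle_is_dcycle}, and in the ``only if'' direction the decomposition $z=z_1+\cdots+z_m$ over $\Z_2$ is valid because distinct $d$-path-connected components share no $d$-faces, so if every $z_i$ were a boundary then $z$ would be too. Your reduction from $k$ to $\Z_2$ via the Universal Coefficient Theorem (equivalently, flat base change along $\Z_2\hookrightarrow k$) is exactly the device the paper itself uses in the proof of Proposition~\ref{prop:chorded_dhom}, and your observation that a $\Z_2$-chain is a $d$-boundary over $k$ if and only if it is one over $\Z_2$ is correctly justified by solvability of the linear system $\partial_{d+1}x=z$ being insensitive to field extension. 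One sentence should be struck, however: the claim that ``over any field $k$ of characteristic $2$ a $d$-chain can be identified with its support, since every non-zero coefficient equals $1$'' is false for $k\neq\Z_2$ (e.g.\ $k=\mathbb{F}_4$ has non-zero elements other than $1$). This does not damage the proof, since your actual argument never uses it --- you immediately pass to $\Z_2$ by the coefficient reduction, where the identification of chains with their supports is legitimate --- but as written it is an incorrect statement and should be removed or restricted to $k=\Z_2$.
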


In order to extend these results to an arbitrary field one must take
additional combinatorial characteristics into consideration.  In
\cite{Con13} the first author proves that orientable $d$-dimensional cycles
have non-zero homology over any field.

\begin{thm} [Connon~\cite{Con13} {\rm{\bf orientable $d$-dimensional cycles result in non-zero homology}}] \label{thm:orientable_cycle_hom}
For any simplicial complex $\Gamma$ and any field $k$, if $\Gamma$ contains an orientable $d$-dimensional cycle the sum of whose oriented $d$-faces is not a $d$-boundary then $\tilde{H}_d(\Gamma;k) \neq 0$.
\end{thm}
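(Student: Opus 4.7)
My plan is to take the given orientable $d$-dimensional cycle $\Omega \subseteq \Gamma$ with its compatible choice of orientations $F_1,\ldots,F_k$ (as in Definition \ref{def:orient_ddim_cycle}), form the $d$-chain
\[
    z \;=\; \sum_{i=1}^{k} F_i \;\in\; C_d(\Gamma; k),
\]
and verify two things: (i) $\partial_d z = 0$, so $z$ is a $d$-cycle in the homological sense; and (ii) $z$ is not a $d$-boundary, which is exactly the hypothesis of the theorem. Together these give that the class of $z$ is a nonzero element of $H_d(\Gamma;k)$, and since $d \geq 1$ we have $\tilde{H}_d(\Gamma;k)=H_d(\Gamma;k)\neq 0$.

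\textbf{Step 1: $z$ is a cycle.} The only nontrivial point is computing $\partial_d z$. Fix any oriented $(d-1)$-face $f$ of $\Gamma$ and consider the coefficient of $f$ in $\partial_d z$. The only terms in $\partial_d z$ contributing to $f$ come from those $F_i$ whose underlying simplex contains the underlying simplex of $f$; in particular, if $f$ is not a $(d-1)$-face of $\Omega$ the coefficient is $0$. If $f$ is a $(d-1)$-face of $\Omega$, each containing $F_i$ contributes $+1$ if the orientation induced on $f$ by $F_i$ agrees with the chosen orientation of $f$, and $-1$ if it is the opposite class. By Definition \ref{def:orient_ddim_cycle}, the orientations induced on $f$ by the $F_i$'s containing it are split equally between the two orientation classes, so the $+1$ and $-1$ contributions cancel pairwise. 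Hence the coefficient of $f$ in $\partial_d z$ is $0$, and since this cancellation is an integer identity it holds over any field $k$.

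\textbf{Step 2: $z$ is not a boundary, and conclusion.} This is exactly the hypothesis: the sum of the oriented $d$-faces of $\Omega$ is not a $d$-boundary in $\Gamma$. Therefore $z \in \ker \partial_d \setminus \im \partial_{d+1}$, so $[z]$ is a nonzero class in $H_d(\Gamma;k)=\tilde{H}_d(\Gamma;k)$, proving $\tilde{H}_d(\Gamma;k)\neq 0$.

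\textbf{Where the work lies.} The entire argument reduces to the sign-cancellation in Step 1, and this cancellation has been engineered into the definition of orientability: Definition \ref{def:orient_ddim_cycle} is precisely the combinatorial condition that makes the boundary coefficients at each $(d-1)$-face vanish. So the proof is essentially immediate once the right $d$-chain $z$ is written down; the only subtlety to watch is that the sign bookkeeping in the induced-orientation convention actually matches the signs in $\partial_d([v_0,\ldots,v_d]) = \sum_i (-1)^i[v_0,\ldots,\widehat{v_i},\ldots,v_d]$, which is precisely why the ``induced orientation'' was defined the way it was in Section \ref{sec:simp_cmpx_hom}. No further hypotheses on $k$ are needed because the cancellation is by matching $+1$ with $-1$, not by dividing by $2$.
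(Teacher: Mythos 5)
Your proof is correct. The paper does not actually prove this theorem --- it imports it from \cite{Con13} --- but your argument is the standard one: the compatibly oriented faces form a $d$-chain whose boundary vanishes because, at each $(d-1)$-face, the equal split of induced orientations forces the $\pm 1$ contributions to cancel (an integer identity, hence valid over any $k$), and non-boundedness is the hypothesis. One cosmetic remark: under the paper's induced-orientation convention the boundary term attached to removing $v_i$ is the \emph{negative} of the induced orientation (the convention differs from $(-1)^i$ by a uniform global sign), so your assignment of $+1$ to ``agrees'' and $-1$ to ``disagrees'' is flipped; since the flip is uniform across all faces it does not affect the cancellation or the conclusion.
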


Using these results we can come closer to obtaining a class of complexes with the right homological conditions to satisfy Theorem \ref{thm:Frob_hom_char}.  It is this goal that motivated the introduction of $d$-chorded simplicial complexes and $d$-cycle-complete complexes in Section \ref{sec:dchorded_complexes}.

As mentioned in Section \ref{sec:dchorded_complexes}, the clique complex of a chordal graph removes all $1$-dimensional homology from its cycles by turning these cycles into sums of $2$-faces.  The idea of ``filling in'' complete subgraphs can easily be extended to simplicial complexes.

\begin{definition}[{\bf $d$-closure}]
Let $\Gamma$ be a pure $d$-dimensional simplicial complex with vertex set $V$.  We define $\Delta_d(\Gamma)$ to be the simplicial complex with vertex set $V$ and such that
\begin{enumerate}
\item $\Gamma \subseteq \Delta_d(\Gamma)$,
\item for all $S \subseteq V$ with $|S|\leq d$, we have $S \in \Delta_d(\Gamma)$, and
\item for any $S\subseteq V$ with $|S| > d+1$, if all $(d+1)$-subsets of $S$ are faces of $\Gamma$ then $S$ is a face of $\Delta_d(\Gamma)$.
\end{enumerate}
\end{definition}

In \cite{Em10} Emtander refers to $\Delta_d(\Gamma)$ as the {\bf complex of $\Gamma$} and in \cite{MYZ12} it is called the {\bf clique complex of $\Gamma$}.  We will refer to $\Delta_d(\Gamma)$ as the {\bf $d$-closure} or simply the {\bf closure} of $\Gamma$.  Note that when $G$ is a graph $\Delta_1(G)$ is equivalent to $\Delta(G)$, the clique complex of $G$.

In Figure \ref{fig:2closure} we give an example of a pure $2$-dimensional complex $\Gamma$ and its $2$-closure $\Delta_2(\Gamma)$.

\begin{figure}[h]
\centering
\subfloat[$\Gamma = \langle abc, abd, acd, bcd, cde \rangle$]{\makebox[6cm]{
            \includegraphics[height=1.3in]{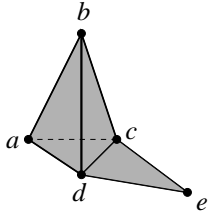}}
            \label{fig:2closure1}} \qquad \qquad
\subfloat[$\Delta_2(\Gamma) = \langle abcd, cde, ae, be \rangle$]{\makebox[6cm]{
	\includegraphics[height=1.3in]{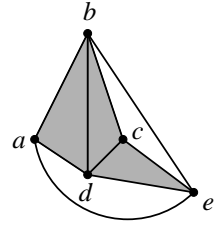}}
	\label{fig:2closure2}}
\caption{$2$-closure} \label{fig:2closure}
\end{figure}

It turns out that the closure operation commutes with the operation of taking induced subcomplexes.

\begin{lem} [{\bf Closure commutes with taking induced subcomplexes}] \label{lem:delta_operation_induced}
Let $\Gamma$ be a pure $d$-dimensional simplicial complex and let $W \subseteq V(\Gamma)$.  Then we have $\Delta_d(\Gamma)_W=\Delta_d((\Gamma_W)^{[d]})$.
\end{lem}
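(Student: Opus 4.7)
The plan is to prove the set equality by a case analysis on the cardinality of a face, showing that each of the three defining clauses for the $d$-closure translates in parallel on both sides. Both complexes $\Delta_d(\Gamma)_W$ and $\Delta_d((\Gamma_W)^{[d]})$ are simplicial complexes on the vertex set $W$, so it suffices to check that a subset $F \subseteq W$ is a face of one if and only if it is a face of the other. I would handle the three size ranges $|F| \le d$, $|F| = d+1$, and $|F| > d+1$ separately.

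First, for $|F| \le d$, clause $(2)$ of the $d$-closure definition puts every such $F \subseteq W$ into both $\Delta_d(\Gamma)$ (and hence its restriction to $W$) and $\Delta_d((\Gamma_W)^{[d]})$, so these cases are immediate. For $|F| = d+1$, I would use the purity of $\Gamma$: since $\Gamma$ is pure of dimension $d$, no face of $\Gamma$ has cardinality exceeding $d+1$, so such an $F$ can enter $\Delta_d(\Gamma)$ only via clause $(1)$, giving $F \in \Delta_d(\Gamma)_W \iff F \in \Gamma \text{ and } F \subseteq W$. On the other side, $F$ has cardinality $d+1$, so $F \in \Delta_d((\Gamma_W)^{[d]})$ iff $F \in (\Gamma_W)^{[d]}$, which by the definition of the pure $d$-skeleton is equivalent to $F$ being a $d$-face of $\Gamma_W$, i.e.\ $F \in \Gamma$ and $F \subseteq W$. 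The two conditions match.

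Next, for $|F| > d+1$, I would use the observation that a face of this size can appear in $\Delta_d(\Gamma)$ only through clause $(3)$ or by downward closure of such a clause-$(3)$ face; either way, all its $(d+1)$-subsets must lie in $\Gamma$ (since any $(d+1)$-subset of $F$ is also a $(d+1)$-subset of the larger clause-$(3)$ face, whose $(d+1)$-subsets are in $\Gamma$ by hypothesis). Thus $F \in \Delta_d(\Gamma)_W$ iff $F \subseteq W$ and every $(d+1)$-subset of $F$ lies in $\Gamma$. The symmetric argument on the right gives $F \in \Delta_d((\Gamma_W)^{[d]})$ iff $F \subseteq W$ and every $(d+1)$-subset of $F$ lies in $(\Gamma_W)^{[d]}$; since $F \subseteq W$ forces each such subset to lie in $W$, membership in $(\Gamma_W)^{[d]}$ is equivalent to membership in $\Gamma$. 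So again the two agree.

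The only real subtlety — and the step I would spell out carefully — is the implicit minimality of $\Delta_d(\Gamma)$ used in the $|F| > d+1$ case: one must verify that faces added purely to make $\Delta_d(\Gamma)$ a simplicial complex don't introduce asymmetries. The argument above handles this cleanly because the clause-$(3)$ criterion is itself closed under taking subsets of size $>d+1$, so downward closure doesn't add anything new beyond what clause $(3)$ already produces. With this observation the three cases combine to give $\Delta_d(\Gamma)_W = \Delta_d((\Gamma_W)^{[d]})$.
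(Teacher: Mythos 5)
Your proof is correct and follows essentially the same route as the paper's: a case analysis on $|F|$ ($\le d$, $=d+1$, $>d+1$) matching each clause of the $d$-closure definition on both sides. If anything, your explicit remark that the clause-(3) criterion is closed under passing to subsets of size $>d+1$ (so downward closure adds nothing new) is a point the paper leaves implicit.
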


\begin{proof}
First note that all possible faces of dimension less than $d$ contained in $W$ exist in both complexes, by the nature of $d$-closure.  No other faces of dimension less than $d$ are possible as the vertex set of both complexes is $W$.  Next consider faces of dimension $d$.  Any face of dimension $d$ in $\Delta_d(\Gamma)_W$ is a face of $\Gamma$ and is contained in $W$.  Such a face is clearly a face of $(\Gamma_W)^{[d]}$ and so of $\Delta_d((\Gamma_W)^{[d]})$.  Similarly any face of dimension $d$ in $\Delta_d((\Gamma_W)^{[d]})$ is a face of $\Gamma_W$ so is a face of $\Gamma$ and lies in $W$.  So it is a face of $\Delta_d(\Gamma)$ and $\Delta_d(\Gamma)_W$ in particular.  Next consider a face $F$ of dimension greater than $d$ that lies in $\Delta_d(\Gamma)_W$.  Such a face lies in $W$ and, by the nature of $d$-closure, all possible subsets of the face of size $d+1$ are $d$-faces of $\Gamma$.  Since $F \subseteq W$, these $d$-faces are also faces of $(\Gamma_W)^{[d]}$ and so $F$ lies in $\Delta_d((\Gamma_W)^{[d]})$.  If $F$ is a face of dimension larger than $d$ in the complex $\Delta_d((\Gamma_W)^{[d]})$ then $F$ lies in $W$ and all possible subsets of the face of size $d+1$ are $d$-faces of $(\Gamma_W)^{[d]}$.  Since these $d$-faces must be faces of $\Gamma$, $F$ lies in $\Delta_d(\Gamma)_W$.  Therefore we have $\Delta_d(\Gamma)_W= \Delta_d(\Gamma_W)$.
\end{proof}

In the second half of Theorem \ref{thm:Frob_original}, Fr\"oberg states that if the Stanley-Reisner ideal of a simplicial complex has a $2$-linear resolution then the complex is equal to the clique complex of its $1$-skeleton.  We can easily extend this idea to the higher-dimensional closure operation.  In fact we only require that the generators of the ideal have the same degree.  Note that one direction of the following proposition is also given by \cite[Proposition 4.4]{MYZ12}.

\begin{prop}[{\bf Minimal generation in fixed degree}]\label{prop:min_gen}
The Stanley-Reisner ideal of a simplicial complex $\Gamma$ is minimally generated in degree $d+1$ if and only if $\Gamma = \Delta_d(\Gamma^{[d]})$.
\end{prop}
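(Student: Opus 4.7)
The plan is to work with the standard description of the minimal monomial generators of $\N(\Gamma)$ as the monomials $x^F$ where $F$ is a \emph{minimal non-face} of $\Gamma$, i.e., $F \notin \Gamma$ but every proper subset of $F$ lies in $\Gamma$. Under this translation, the statement becomes: every minimal non-face of $\Gamma$ has cardinality exactly $d+1$ if and only if $\Gamma = \Delta_d(\Gamma^{[d]})$. I would prove the two directions separately.

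For the forward direction, assume all minimal non-faces have size $d+1$. First I would observe that this forces every subset of $V(\Gamma)$ of size at most $d$ to be a face of $\Gamma$: if some such subset were a non-face, it would contain a minimal non-face of size at most $d$, contradicting the hypothesis. Using this, I would show $\Delta_d(\Gamma^{[d]}) \subseteq \Gamma$ by casework on the size of a face $F \in \Delta_d(\Gamma^{[d]})$: sizes $\leq d$ lie in $\Gamma$ by the above observation, size $d+1$ lies in $\Gamma$ because $\Gamma^{[d]} \subseteq \Gamma$, and for size $> d+1$ I would argue by contradiction --- if $F \notin \Gamma$, take a minimal non-face inside $F$, which by hypothesis has size $d+1$, giving a $(d+1)$-subset of $F$ not in $\Gamma^{[d]}$, contradicting property 3 of the $d$-closure. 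The reverse inclusion $\Gamma \subseteq \Delta_d(\Gamma^{[d]})$ is direct from the three defining properties of $\Delta_d$ applied in the same three size ranges.

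For the converse, assume $\Gamma = \Delta_d(\Gamma^{[d]})$ and let $F$ be any minimal non-face of $\Gamma$. Property 2 of $d$-closure forces $|F| \geq d+1$. If $|F| > d+1$, then minimality means all proper subsets of $F$ are faces of $\Gamma$; in particular all $(d+1)$-subsets of $F$ are $d$-faces of $\Gamma$, hence lie in $\Gamma^{[d]}$, so property 3 of $d$-closure puts $F$ back into $\Delta_d(\Gamma^{[d]}) = \Gamma$, contradicting $F \notin \Gamma$. Hence $|F| = d+1$, as required. (The degenerate case $\N(\Gamma) = 0$, where $\Gamma$ is the full simplex on $V$, I would treat briefly and separately: both sides of the biconditional hold vacuously.)

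There is no serious obstacle here; the argument is mostly a matter of carefully unpacking the definition of $\Delta_d$ against the standard description of minimal generators of a Stanley--Reisner ideal. The only mildly subtle point is in the forward direction at faces of size $> d+1$, where one must explicitly invoke minimality of the generators (via the existence of a minimal non-face inside any non-face) to promote a hypothetical missing face of $\Gamma$ to a violation of property 3. I would organize the write-up around the casework by $|F|$ to keep this clean.
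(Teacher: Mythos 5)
Your proposal is correct and follows essentially the same route as the paper's proof: the same three-way casework on $|F|$ to establish $\Delta_d(\Gamma^{[d]}) \subseteq \Gamma$ and the reverse inclusion, and the same observation for the converse that a minimal non-face of size $\geq d+2$ would have all its $(d+1)$-subsets in $\Gamma^{[d]}$ and so be absorbed by the closure. The only cosmetic difference is that you phrase everything in terms of minimal non-faces where the paper speaks of degree-$(d+1)$ generators and divisibility of $x^F$, which are equivalent formulations.
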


\begin{proof}
Suppose that $\N(\Gamma)$ is minimally generated in degree $d+1$. Let $F$ be any face of $\Delta_d(\Gamma^{[d]})$.  First suppose that $|F| < d+1$.  If $F$ is not a face of $\Gamma$ then $x^F \in \N(\Gamma)$.  However, $\N(\Gamma)$ is minimally generated by elements of degree $d+1$ and so we have a contradiction.  Therefore $F \in \Gamma$.

Suppose now that $|F|=d+1$.  By the definition of $d$-closure if $F \in \Delta_d(\Gamma^{[d]})$ then $F \in \Gamma^{[d]}$.  Hence we must have $F \in \Gamma$ also.

If $|F|>d+1$ then by the definition of $\Delta_d(\Gamma^{[d]})$ all $(d+1)$-subsets of $F$ are faces of $\Gamma^{[d]} \subseteq \Gamma$.  If $F$ is not a face of $\Gamma$ then we know that $x^F \in \N(\Gamma)$.  Hence $x^F$ is divisible by some monomial of degree $d+1$ whose elements make up a non-face of $\Gamma$.  This is not possible since all $(d+1)$-subsets of $F$ are faces of $\Gamma$. Therefore $F$ must be a face of $\Gamma$.  We conclude that $\Delta_d(\Gamma^{[d]}) \subseteq \Gamma$.

Now let $F$ be a face of $\Gamma$.  If $|F| < d+1$ then $F$ is automatically a face of $\Delta_d(\Gamma^{[d]})$.  If $|F| = d+1$ then $F$ is a face of $\Gamma^{[d]}$ and so a face of $\Delta_d(\Gamma^{[d]})$.  If $|F| > d+1$ then all $(d+1)$-subsets of $F$ are clearly faces of $\Gamma^{[d]}$.  By the definition of $d$-closure we have that $F$ is a face of $\Delta_d(\Gamma^{[d]})$.  Hence all faces of $\Gamma$ are faces of $\Delta_d(\Gamma^{[d]})$.  Therefore $\Gamma = \Delta_d(\Gamma^{[d]})$.

Now suppose that $\Gamma = \Delta_d(\Gamma^{[d]})$.  Then $\Gamma$ contains all possible faces of dimension less than $d$ by definition.  Also, any subset of vertices of size at least $d+2$, all of whose subsets are faces of $\Gamma$, must also be a face of $\Gamma$ by the definition of $d$-closure.  Hence all minimal non-faces of $\Gamma$ must have size exactly $d+1$.  Thus $\N(\Gamma)$ is minimally generated in degree $d+1$.
\end{proof}

By Theorem \ref{thm:Frob_hom_char}, in order to show that a pure $d$-dimensional simplicial complex has a linear resolution we must show that the homology of the $d$-closure of the complex and the homologies of the induced subcomplexes are zero in all dimensions except $d-1$.  By the nature of the closure operation it is trivial to see that, for the $d$-closure of a complex, all homology groups in dimension less than $d-1$ are zero as all faces of dimension less than $d$ are added by this operation.  We will show next that when the complex is $d$-chorded the $d$-level homology of the closure is also zero.

\begin{lem} \label{lem:dchorded_boundary}
Let $\Omega$ be a $d$-dimensional cycle in a $d$-chorded complex $\Gamma$.  The sum of the $d$-faces of $\Omega$ forms a $d$-boundary on $V(\Omega)$ in $\Delta_d(\Gamma)$ over $\Z_2$.
\end{lem}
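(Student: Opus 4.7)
The plan is to prove the lemma by induction on $|V(\Omega)|$, first reducing to the face-minimal case via Lemma~\ref{lem:cycle_decomp}. If $\Omega$ partitions into face-minimal $d$-dimensional cycles $\Phi_1, \ldots, \Phi_m$ with disjoint facet sets, then $\sum_{F \in \facets(\Omega)} F = \sum_j \sum_{F \in \facets(\Phi_j)} F$ in $C_d(\Delta_d(\Gamma);\Z_2)$, so it suffices to show each $\Phi_j$-sum is a $d$-boundary supported on $V(\Phi_j) \subseteq V(\Omega)$. Since $|V(\Phi_j)| \leq |V(\Omega)|$, I would order the induction so that at each value $n = |V(\Omega)|$ the face-minimal case with $n$ vertices is handled first (using the inductive hypothesis for strictly fewer than $n$ vertices) and then invoked to treat non-face-minimal cycles with $n$ vertices.

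The base case $|V(\Omega)| = d+2$ is clean: Proposition~\ref{prop:smallest_cycle} forces $\Omega = \Lambda_{d+2}^d$, and since all $(d+1)$-subsets of $V(\Omega)$ are $d$-faces of $\Gamma$, the definition of $d$-closure places the full $(d+1)$-simplex $V(\Omega)$ into $\Delta_d(\Gamma)$; its $\Z_2$-boundary is exactly $\sum_{F \in \facets(\Omega)} F$. For the inductive step with face-minimal $\Omega$ having $|V(\Omega)| > d+2$, $\Omega$ cannot be $d$-complete (otherwise the sub-cycle $\Lambda_{d+2}^d$ on any $d+2$ vertices of $V(\Omega)$ would contradict face-minimality), so by $d$-chordedness $\Omega$ admits a chord set $C \subseteq \Gamma$ producing $d$-dimensional cycles $\Omega_1, \ldots, \Omega_k$ in $\Gamma$ with $|V(\Omega_i)| < |V(\Omega)|$. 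The inductive hypothesis supplies $(d+1)$-chains $c_i$ in $\Delta_d(\Gamma)$ supported on $V(\Omega_i) \subseteq V(\Omega)$ with $\partial_{d+1}(c_i) = \sum_{F \in \facets(\Omega_i)} F$ over $\Z_2$.

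The final step is to combine these chains using the chord set parity conditions. Writing
\begin{equation*}
\sum_{i=1}^k \sum_{F \in \facets(\Omega_i)} F \;=\; \sum_{F \in \facets(\Omega)\,\cup\, C} \bigl|\{i : F \in \facets(\Omega_i)\}\bigr|\, F
\end{equation*}
and invoking chord set conditions (2) and (3), each $F \in C$ contributes an even coefficient (zero mod $2$) and each $F \in \facets(\Omega)$ contributes an odd coefficient (one mod $2$), so the right-hand side collapses over $\Z_2$ to $\sum_{F \in \facets(\Omega)} F$. Thus $\sum_{F \in \facets(\Omega)} F = \partial_{d+1}\bigl(\sum_i c_i\bigr)$ is a $d$-boundary supported on $\bigcup_i V(\Omega_i) \subseteq V(\Omega)$. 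The main bookkeeping hurdle is the non-face-minimal case in which some $\Phi_j$ satisfies $|V(\Phi_j)| = |V(\Omega)|$, handled by the internal ordering of the induction noted above; the essential conceptual point is that working over $\Z_2$ is precisely what converts the parity conditions of the chord set definition into the required cancellation, so the argument will not carry over verbatim to other characteristics.
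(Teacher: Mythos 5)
Your proof is correct and follows essentially the same route as the paper's: induction on $|V(\Omega)|$ with base case $\Lambda_{d+2}^d$ sitting inside a $(d+1)$-simplex of the closure, reduction to the face-minimal case via Lemma~\ref{lem:cycle_decomp}, and then the chord-set decomposition with the parity conditions (2) and (3) providing the $\Z_2$ cancellation. The only cosmetic difference is that the paper disposes of a $d$-complete face-minimal cycle directly (its vertex set spans a simplex in the closure) where you instead observe such a cycle cannot exist above $d+2$ vertices; both are fine, and your explicit care about the induction ordering for non-face-minimal cycles is a reasonable tightening of the paper's ``without loss of generality.''
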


\begin{proof}
We will use induction on the number of vertices of $\Omega$.  By Proposition \ref{prop:smallest_cycle}, the fewest number of vertices that a $d$-dimensional cycle can have is $d+2$ and this occurs when $\Omega = \Lambda_{d+2}^{d}$.  In this case $\Delta_d(\Gamma)_{V(\Omega)}$ is a $(d+1)$-simplex and so the sum of the faces of $\Omega$ forms the $d$-boundary of $\Delta_d(\Gamma)_{V(\Omega)}$ on $V(\Omega)$.

Now suppose that the statement holds for all $d$-dimensional cycles with fewer than $n$ vertices and let $\Omega$ have $n$ vertices.  If $\Omega$ is not face-minimal then by Lemma \ref{lem:cycle_decomp} it can be partitioned into face-minimal $d$-dimensional cycles.  To show that the sum of the $d$-faces of $\Omega$ forms a $d$-boundary in $\Delta_d(\Gamma)_{V(\Omega)}$ we need only show that the sum of the $d$-faces of each such face-minimal cycle forms a $d$-boundary in $\Delta_d(\Gamma)_{V(\Omega)}$ since then we may add them together to show that the original sum is a $d$-boundary.  Therefore, without loss of generality, we may assume that $\Omega$ is a face-minimal $d$-dimensional cycle.

If $\Omega$ is $d$-complete then $\Delta_d(\Gamma)_{V(\Omega)}$ is an $(n - 1)$-simplex in $\Delta_d(\Gamma)$ and so the sum of the $d$-faces of $\Omega$ forms a $d$-boundary on $V(\Omega)$.  If $\Omega$ is not complete then since $\Gamma$ is $d$-chorded there exists a chord set $C$ of $\Omega$ in $\Gamma$.  Let the $d$-dimensional cycles associated to $C$ be $\Omega_1,\ldots,\Omega_k$.  We know that $|V(\Omega_i)| < |V(\Omega)|$ for all $i$ and so, by induction, the sum of the $d$-faces of $\Omega_i$ forms a $d$-boundary in $\Delta_d(\Gamma)_{V(\Omega_i)}$ over $\Z_2$.  By properties 2 and 3 of a chord set, over $\Z_2$ we have
\[
    \sum_{i=1}^k \left(\sum (d\textrm{-faces of } \Omega_i)\right) = \sum (d\textrm{-faces of } \Omega).
\]
Therefore the sum of the $d$-faces of $\Omega$ is the $d$-boundary in $\Delta_d(\Gamma)_{V(\Omega)}$ of the sum of the $(d+1)$-faces for which the sums of the $d$-faces of the $\Omega_i$'s are $d$-boundaries.
\end{proof}

\begin{prop} [{\bf Vanishing homologies in $d$-closure of $d$-chorded complexes}] \label{prop:chorded_dhom}
Let $\Gamma$ be a $d$-chorded simplicial complex.  Then for any $W \subseteq V(\Gamma)$ and any field $k$ of characteristic $2$ we have $\tilde{H}_i(\Delta_d(\Gamma)_W;k)=0$ for $0 \leq i \leq d-2$ and $i=d$.
\end{prop}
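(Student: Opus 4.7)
The plan is to handle the two homological ranges separately, since $0\le i\le d-2$ is a purely combinatorial fact about $d$-closure, while $i=d$ is exactly where the $d$-chorded hypothesis and characteristic $2$ enter the picture through the results of Section~\ref{sec:homology_ddim_cycles}.

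First I would dispose of the case $0\le i\le d-2$. By the definition of $d$-closure, every subset of $V(\Gamma)$ of size $\le d$ is a face of $\Delta_d(\Gamma)$, and hence every subset of $W$ of size $\le d$ is a face of $\Delta_d(\Gamma)_W$. In particular, the $(d-1)$-skeleton of $\Delta_d(\Gamma)_W$ coincides with the $(d-1)$-skeleton of the full simplex on $W$. Because the computation of $\tilde{H}_i$ only uses chains in dimensions $i-1$, $i$, $i+1$, and $i+1\le d-1$, the relevant portion of the augmented chain complex of $\Delta_d(\Gamma)_W$ agrees with that of the full simplex on $W$. Since the simplex is contractible, its reduced homology vanishes in all dimensions; therefore $\tilde{H}_i(\Delta_d(\Gamma)_W;k)=0$ for $0\le i\le d-2$.

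For the top-dimensional case $i=d$ the strategy is to invoke the characterization of homology via $d$-dimensional cycles. By Theorem~\ref{thm:cycle_hom_z2}, since $\operatorname{char}k=2$, it suffices to show that for every $d$-dimensional cycle $\Omega$ in $\Delta_d(\Gamma)_W$ the sum of the $d$-faces of $\Omega$ is a $d$-boundary in $\Delta_d(\Gamma)_W$. To access Lemma~\ref{lem:dchorded_boundary}, I want to rewrite $\Delta_d(\Gamma)_W$ as the $d$-closure of some $d$-chorded complex. This is exactly what Lemma~\ref{lem:delta_operation_induced} provides, namely $\Delta_d(\Gamma)_W=\Delta_d((\Gamma_W)^{[d]})$, and Lemma~\ref{lem:induced_complexes_chorded} says that $(\Gamma_W)^{[d]}$ is $d$-chorded.

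The remaining step is to observe that any $d$-dimensional cycle $\Omega\subseteq\Delta_d(\Gamma)_W$ is in fact a $d$-dimensional cycle inside the $d$-chorded complex $(\Gamma_W)^{[d]}$. This uses the fact that $d$-closure does not create new $d$-faces: clause~2 of the definition adds faces only of size $\le d$, clause~3 only of size $\ge d+2$, so the $d$-faces of $\Delta_d((\Gamma_W)^{[d]})$ coincide with those of $(\Gamma_W)^{[d]}$. Applying Lemma~\ref{lem:dchorded_boundary} to $\Omega$ inside $(\Gamma_W)^{[d]}$ shows that the sum of its $d$-faces is a $d$-boundary in $\Delta_d((\Gamma_W)^{[d]})=\Delta_d(\Gamma)_W$, and Theorem~\ref{thm:cycle_hom_z2} then yields $\tilde{H}_d(\Delta_d(\Gamma)_W;k)=0$. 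The main content of the argument is really Lemma~\ref{lem:dchorded_boundary}; the proposition itself is a bookkeeping assembly of that lemma together with Lemmas~\ref{lem:induced_complexes_chorded} and~\ref{lem:delta_operation_induced}, and the only minor subtlety is verifying that $d$-closure does not alter the set of $d$-faces so that the $d$-chorded hypothesis can transfer through the closure.
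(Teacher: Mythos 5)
Your proposal is correct and follows essentially the same route as the paper: the low-dimensional cases come from the $d$-closure containing the full $(d-1)$-skeleton, and the $i=d$ case combines Lemmas~\ref{lem:induced_complexes_chorded} and~\ref{lem:delta_operation_induced} with Theorem~\ref{thm:cycle_hom_z2} and Lemma~\ref{lem:dchorded_boundary}. The only cosmetic difference is that the paper first reduces from $k$ to $\Z_2$ via the Universal Coefficient Theorem, whereas you work over $k$ directly; this is harmless since a $0$--$1$ boundary relation over $\Z_2$ persists over any field of characteristic~$2$.
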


\begin{proof}
Using the Universal Coefficient Theorem (see \cite[Theorem 3A.3]{Hatch02}) it is enough to show this for the case $k=\Z_2$.

From the discussion preceding Lemma \ref{lem:dchorded_boundary} and by Lemmas \ref{lem:induced_complexes_chorded} and \ref{lem:delta_operation_induced} we know that \newline $\tilde{H}_i(\Delta_d(\Gamma)_W;\Z_2)=0$ for $0 \leq i \leq d-2$ and any $W \subseteq V(\Gamma)$.  We need only show that $\tilde{H}_d(\Delta_d(\Gamma)_W;\Z_2)=0$ for any $W \subseteq V(\Gamma)$.  Again by Lemmas \ref{lem:induced_complexes_chorded} and \ref{lem:delta_operation_induced} it is enough to show that $\tilde{H}_d(\Delta_d(\Gamma);\Z_2)=0$.

If $\tilde{H}_d(\Delta_d(\Gamma);\Z_2)\neq 0$ then by Theorem \ref{thm:cycle_hom_z2} we know that $\Delta_d(\Gamma)$ contains a $d$-dimensional cycle, the sum of whose $d$-faces does not form a $d$-boundary.  This contradicts Lemma \ref{lem:dchorded_boundary} as a $d$-dimensional cycle in $\Delta_d(\Gamma)$ is a $d$-dimensional cycle in $\Gamma$ also.  Therefore we must have $\tilde{H}_d(\Delta_d(\Gamma);\Z_2)=0$.
\end{proof}

It will also be of use to us that, over $\Z_2$, certain $d$-dimensional cycles whose faces form the support complex of a $d$-boundary have chord sets.

\begin{lem} \label{lem:bdry_cycles_have_chord_sets}
Let $\Omega$ be a face-minimal $d$-dimensional cycle that is not $d$-complete in a simplicial complex $\Gamma$.  If, over $\Z_2$, $\Omega$ is the support complex of a $d$-boundary of faces of $\Gamma_{V(\Omega)}$ then $\Omega$ has a chord set in $\Gamma$.
\end{lem}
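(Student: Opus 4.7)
The plan is to extract the chord set directly from the $(d+1)$-chain whose $\Z_2$-boundary equals the sum of the $d$-faces of $\Omega$. By hypothesis there exist distinct $(d+1)$-faces $G_1,\ldots,G_k$ of $\Gamma_{V(\Omega)}$ satisfying
\[
\sum_{F \in \facets(\Omega)} F \;=\; \partial_{d+1}\Bigl(\sum_{j=1}^{k} G_j\Bigr)
\]
over $\Z_2$. For each $j$ let $\Omega_j$ denote the boundary complex of the simplex on $G_j$, that is, the subcomplex of $\Gamma$ whose facets are the $d+2$ subsets of $G_j$ of size $d+1$. Each $\Omega_j$ is a copy of $\Lambda_{d+2}^d$ and hence a $d$-dimensional cycle by Proposition \ref{prop:smallest_cycle}. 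I would then propose as chord set
\[
C \;=\; \Bigl(\bigcup_{j=1}^{k} \facets(\Omega_j)\Bigr) \setminus \facets(\Omega),
\]
with associated cycles $\Omega_1,\ldots,\Omega_k$; by construction every element of $C$ is a $d$-face of $\Gamma_{V(\Omega)}$ that does not lie in $\Omega$, as required.

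Verifying the four conditions of a chord set should then be essentially bookkeeping against the chain identity $\sum_j \Omega_j = \Omega$ over $\Z_2$. Conditions 3 and 2 can be read off the coefficients: a facet of $\Omega$ has coefficient $1$ in the sum, so it lies in an odd number of the $\Omega_j$, whereas any $d$-face appearing in some $\Omega_j$ but not in $\Omega$ has coefficient $0$ in the sum, so it lies in an even number of the $\Omega_j$. Condition 1 is then immediate, since condition 3 forces each facet of $\Omega$ into at least one $\Omega_j$, and $C$ was defined to absorb all remaining facets that appear. For condition 4, because $\Omega$ is not $d$-complete, Proposition \ref{prop:smallest_cycle} implies $|V(\Omega)| \geq d+3$, while $|V(\Omega_j)| = d+2$ for every $j$.

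The one genuinely delicate point, and the main obstacle I anticipate, is verifying $k \geq 2$, which the definition of a chord set requires. If $k = 1$, then $\partial_{d+1}(G_1) = \sum_{F \in \facets(\Omega)} F$, so $\Omega$ would be exactly the boundary of a $(d+1)$-simplex and must equal $\Lambda_{d+2}^d$; this contradicts the assumption that $\Omega$ is not $d$-complete. Face-minimality of $\Omega$ does not appear to enter the above argument explicitly, but is a natural companion to the hypothesis that the entire cycle $\Omega$ is supported on a single $d$-boundary.
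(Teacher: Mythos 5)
Your proposal is correct and is essentially the paper's own argument: the same $(d+1)$-chain $G_1,\ldots,G_\ell$, the same chord set (the $d$-faces of the $G_j$ that are not facets of $\Omega$), the same cycles $\Omega_j=\langle G_j\rangle^{[d]}=\Lambda_{d+2}^d$, the same $\Z_2$-coefficient bookkeeping for conditions 1--3, and the same arguments for $\ell\geq 2$ and for condition 4 via Proposition \ref{prop:smallest_cycle}. On the one point you flag, the paper does invoke face-minimality: it is used to show that the proposed set $C$ is non-empty, since if some $G_j$ had all of its $d$-faces among the facets of $\Omega$ then $\langle G_j\rangle^{[d]}$ would be a $d$-dimensional cycle on a strict subset of the $d$-faces of $\Omega$ (strict because $|V(\Omega)|>d+2$), contradicting face-minimality --- though under a strictly literal reading of the chord-set definition this non-emptiness check is the only place the hypothesis enters.
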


\begin{proof}
Let the $d$-faces of $\Omega$ be $F_1,\ldots,F_k$.   Since, in $\Z_2$, $\Omega$ forms the support complex of a $d$-boundary of faces of $\Gamma_{V(\Omega)}$, there exist $(d+1)$-faces $G_1,\ldots,G_\ell$ of $\Gamma_{V(\Omega)}$ such that
\[
    \partial_{d+1}\left(\sum_{i=1}^\ell G_i\right) = \sum_{i=1}^k F_i.
\]
Note that $\ell \geq 2$ since otherwise we have
\[
    \partial_{d+1}(G_1)= \sum_{i=1}^k F_i
\]
and since $\langle G_1\rangle ^{[d]} = \Lambda_{d+2}^{d}$ this indicates that $\Omega$ itself is $\Lambda_{d+2}^{d}$.  This can't happen since $\Omega$ is not $d$-complete.

Let $E_1,\ldots,E_m$ be the $d$-faces of $G_1,\ldots,G_\ell$ which are not $d$-faces of $\Omega$.  Note that, since $\langle G_i\rangle ^{[d]}=\Lambda_{d+2}^{d}$ with $G_i \subseteq V(\Omega)$ for all $i$ and $\Omega$ is a face-minimal non-$d$-complete $d$-dimensional cycle, the set $\{E_1,\ldots,E_m\}$ is non-empty.  We claim that $\{E_1,\ldots,E_m\}$ is a chord set of $\Omega$ in $\Gamma$.

First note that the vertices of $E_1,\ldots,E_m$ are contained in $V(\Omega)$ since $G_1,\ldots,G_\ell$ are faces of $\Gamma_{V(\Omega)}$. Also, by construction we have $\{E_1,\ldots,E_m\} \cap \{F_1,\ldots,F_k\}=\emptyset$.  We set $\Omega_i = \langle G_i\rangle ^{[d]}$ for each $i \in \{1,\ldots,\ell\}$.

Since each $d$-face of the two sets $\{F_1,\ldots,F_k\}$ and $\{E_1,\ldots,E_m\}$ appears in at least one of the $G_i$'s, we have
\[
    \bigcup_{i=1}^\ell \facets(\Omega_i) = \facets(\Omega) \cup \{E_1,\ldots,E_m\}.
\]
Over $\Z_2$ we have
\[
\partial_{d+1}\left(\sum_{i=1}^\ell G_i\right) = \sum_{i=1}^k F_i
\]
and so we know that for each $1\leq i \leq m$ the face $E_i$ must be contained in an even number of the faces $G_1,\ldots,G_\ell$ as $E_i$ does not appear on the right-hand side of this equation.  Therefore $E_i$ is also contained in an even number of the cycles $\Omega_1,\ldots,\Omega_{\ell}$.  Similarly each $d$-face of $\Omega$ must be contained in an odd number of the cycles $\Omega_1,\ldots,\Omega_{\ell}$.

Finally since $\Omega$ is not $d$-complete, we know that $|V(\Omega)|> d+2$ by Proposition \ref{prop:smallest_cycle}.  Therefore since $|V(\Omega_i)|=d+2$ for all $i \in \{1,\ldots,\ell\}$, we have that $|V(\Omega_i)|<|V(\Omega)|$ for all $i \in \{1,\ldots,\ell\}$.  Hence $\{E_1,\ldots,E_m\}$ is a chord set of $\Omega$ in $\Gamma$.
\end{proof}


\section{A necessary condition for a monomial ideal to have a linear resolution} \label{sec:necessary_condition}

Whether or not a monomial ideal has a linear resolution over a field $k$ depends on the characteristic of $k$.  A typical example of this is demonstrated by the triangulation of the real projective plane shown in Figure \ref{fig:real_proj_plane}.  The Stanley-Reisner ideal of the $2$-closure of this simplicial complex has a linear resolution only when the characteristic of $k$ is not 2.  This complex is an example of a non-orientable $2$-dimensional cycle and it demonstrates non-zero homology in dimension $2$ only over fields with characteristic $2$.

It turns out however, that when a square-free monomial ideal has a linear resolution this forces restrictions on the orientable $d$-dimensional cycles of the associated simplicial complex regardless of the field in question.  In the case that the field has characteristic $2$ the resulting complexes are forced to be in an even smaller class.

\begin{thm}[{\bf Main theorem}]\label{thm:one_dir}
Let $\Gamma$ be a simplicial complex, let $k$ be any field and let $d \geq 1$.  If the Stanley-Reisner ideal of $\Gamma$ has a $(d+1)$-linear resolution over $k$ then $\Gamma = \Delta_d(\Gamma^{[d]})$ and
\begin{enumerate}
\item $\Gamma^{[d]}$ is orientably-$d$-cycle-complete
\item $\Gamma^{[d]}$ is $d$-chorded if $k$ has characteristic $2$.
\end{enumerate}
\end{thm}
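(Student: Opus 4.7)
The plan is to first apply Proposition \ref{prop:min_gen} and then leverage Fr\"oberg's homological characterization (Theorem \ref{thm:Frob_hom_char}) together with the structural results on $d$-dimensional cycles developed in Sections \ref{sec:d_dim_cycles}--\ref{sec:homology_ddim_cycles}. A $(d+1)$-linear resolution forces $\N(\Gamma)$ to be minimally generated in degree $d+1$, so Proposition \ref{prop:min_gen} gives $\Gamma = \Delta_d(\Gamma^{[d]})$ immediately. Moreover, by Theorem \ref{thm:Frob_hom_char}, every induced subcomplex $\Gamma_W$ of $\Gamma$ satisfies $\tilde{H}_i(\Gamma_W;k)=0$ for all $i \neq d-1$; this vanishing is the single engine that will drive both parts.

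For part (1), I would argue by contradiction: suppose $\Omega$ is an orientably-vertex-minimal $d$-dimensional cycle in $\Gamma^{[d]}$ that is not $d$-complete, and set $W=V(\Omega)$. By Proposition \ref{prop:smallest_cycle}, $|W|\geq d+3$. The key observation is that $\Gamma_W$ can contain no $(d+1)$-face $G$, for then $\langle G\rangle^{[d]}=\Lambda_{d+2}^{d}$ would sit inside $\Gamma^{[d]}$ as an orientable $d$-dimensional cycle on the strict subset $V(G)\subsetneq W$, contradicting orientable-vertex-minimality. With $C_{d+1}(\Gamma_W)=0$, the only $d$-boundary in $\Gamma_W$ is zero, so the nonzero chain given by the oriented $d$-faces of $\Omega$ is not a boundary; Theorem \ref{thm:orientable_cycle_hom} then gives $\tilde{H}_d(\Gamma_W;k)\neq 0$, contradicting Theorem \ref{thm:Frob_hom_char}.

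For part (2), assuming $\operatorname{char} k=2$, let $\Omega$ be any face-minimal $d$-dimensional cycle in $\Gamma^{[d]}$ that is not $d$-complete, and set $W=V(\Omega)$. Since $\tilde{H}_d(\Gamma_W;k)=0$, the Universal Coefficient Theorem (or, more directly, the fact that $k$ is a $\Z_2$-vector space and chain-level base change commutes with homology) yields $\tilde{H}_d(\Gamma_W;\Z_2)=0$. By Proposition \ref{prop:ddimcycle_is_dcycle}, the sum of the $d$-faces of $\Omega$ is a homological $d$-cycle over $\Z_2$ in $\Gamma_W$; vanishing homology forces it to be a $d$-boundary in $\Gamma_W$. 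Lemma \ref{lem:bdry_cycles_have_chord_sets} now produces a chord set of $\Omega$ in $\Gamma$, and since a chord set is by definition a collection of $d$-faces, it is also a chord set in $\Gamma^{[d]}$. Hence every face-minimal non-$d$-complete cycle in $\Gamma^{[d]}$ has a chord set, so $\Gamma^{[d]}$ is $d$-chorded.

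The main obstacle is conceptual rather than computational: one must recognize that orientable-vertex-minimality is precisely the hypothesis that forbids $(d+1)$-faces of $\Gamma$ on $V(\Omega)$, because the boundary of any $(d+1)$-simplex is the orientable cycle $\Lambda_{d+2}^{d}$. Once this is in place, part (1) follows from Theorem \ref{thm:orientable_cycle_hom} over an arbitrary field, while part (2) reduces to a direct application of Lemma \ref{lem:bdry_cycles_have_chord_sets} after transferring the vanishing homology to $\Z_2$-coefficients.
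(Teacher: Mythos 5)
Your proof is correct and follows essentially the same route as the paper's: Proposition \ref{prop:min_gen} for the closure statement, then for part (1) the observation that orientable-vertex-minimality forbids $(d+1)$-faces on $V(\Omega)$ (so the cycle cannot bound) combined with Theorem \ref{thm:orientable_cycle_hom}, and for part (2) the Universal Coefficient Theorem, Proposition \ref{prop:ddimcycle_is_dcycle}, and Lemma \ref{lem:bdry_cycles_have_chord_sets}. The only cosmetic difference is that the paper phrases the key step of part (1) as showing every $(d+2)$-subset of $W$ contains a non-face of size $d+1$, which is equivalent to your statement that $\Gamma_W$ has no $(d+1)$-faces.
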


\begin{proof}
Since $\N(\Gamma)$ has a $(d+1)$-linear resolution, it is minimally generated in degree $d+1$.  Therefore by Proposition \ref{prop:min_gen} we have $\Gamma = \Delta_d(\Gamma^{[d]})$.

\begin{enumerate}
\item For a contradiction, let $\Omega$ be any orientably-vertex-minimal $d$-dimensional cycle in $\Gamma^{[d]}$ which is not $d$-complete.  Let the $d$-faces of $\Omega$ be $F_1,\ldots,F_k$ and let $W=\bigcup_{i=1}^k F_i$.  Since $\Omega$ is not $d$-complete, $|W|>d+2$ by Proposition \ref{prop:smallest_cycle}.  We claim that $\Gamma_W=(\Delta_d(\Gamma^{[d]}))_W$ has dimension $d$.  To show this we must demonstrate that every $(d+2)$-subset of $W$ contains a $(d+1)$-subset which is not a face of $\Gamma^{[d]}$.

Suppose that there is some $(d+2)$-subset $S$ of $W$ such that all $(d+1)$-subsets of $S$ are faces of $\Gamma^{[d]}$. Then $\Gamma^{[d]}_S$ is $d$-complete and so by Proposition \ref{prop:smallest_cycle} $\Gamma^{[d]}_S$ is an orientable $d$-dimensional cycle. This is a contradiction since $\Omega$ is orientably-vertex-minimal and $d+2=|S|<|W|$.  Therefore every $(d+2)$-subset of $W$ must contain a $(d+1)$-subset which is not a face of $\Gamma^{[d]}$.  Therefore by the definition of $d$-closure $(\Delta_d(\Gamma^{[d]}))_W=\Gamma_W$ cannot contain any faces of size $d+2$ or higher.  Hence $\Gamma_W$ has dimension $d$.

Since $\dim \Gamma_W =d$, the sum of the $d$-faces of $\Omega$ cannot be a $d$-boundary.  Therefore by Theorem \ref{thm:orientable_cycle_hom}  we know that $\tilde{H}_d(\Gamma_W;k) \neq 0$.  This is a contradiction to Theorem \ref{thm:Frob_hom_char} since $\N(\Gamma)$ has a $(d+1)$-linear resolution. Therefore $\Gamma^{[d]}$ has no orientably-vertex-minimal $d$-dimensional cycles which are not $d$-complete.  Hence $\Gamma^{[d]}$ is orientably-$d$-cycle-complete.

\item Let $\Omega$ be any face-minimal $d$-dimensional cycle in $\Gamma^{[d]}$ which is not $d$-complete.   Since $\N(\Gamma)$ has a $(d+1)$-linear resolution over $k$ and $\Gamma = \Delta_d(\Gamma^{[d]})$, we know by Theorem \ref{thm:Frob_hom_char} that
\[
	\tilde{H}_d(\Delta_d(\Gamma^{[d]})_{V(\Omega)};k)=0.
\]
Using the Universal Coefficient Theorem (\cite[Theorem 3A.3]{Hatch02}) we find that
\[
        \tilde{H}_d(\Delta_d(\Gamma^{[d]})_{V(\Omega)};\Z_2)=0.
\]
Let the $d$-faces of $\Omega$ be $F_1,\ldots,F_k$.  By Proposition \ref{prop:ddimcycle_is_dcycle} $\sum_{i=1}^k F_i$ is a homological $d$-cycle over $\Z_2$.  Therefore $\sum_{i=1}^k F_i$ is a $d$-boundary in $\Delta_d(\Gamma^{[d]})_{V(\Omega)}$.  Hence by Lemma \ref{lem:bdry_cycles_have_chord_sets} we know that $\Omega$ has a chord set in $\Gamma^{[d]}$.  Therefore $\Gamma^{[d]}$ is $d$-chorded. \qedhere
\end{enumerate}
\end{proof}

Theorem \ref{thm:one_dir} includes a generalization of one direction of Fr\"oberg's theorem.  It is equivalent to Theorem \ref{thm:converse_intro} given in Section \ref{sec:intro}.  This follows from the fact that the facet ideal of the $d$-complement of a simplicial complex $\Gamma$ is the Stanley-Reisner ideal of the $d$-closure of $\Gamma$.

The following corollary to Theorem \ref{thm:one_dir} part 2 gives us a necessary condition for a square-free monomial ideal to have a linear resolution over all fields.

\begin{corollary} [{\bf Linear resolution over all fields implies $d$-chorded}] \label{cor:one_dir_over_all}
Let $I$ be a square-free monomial ideal with Stanley-Reisner complex $\Gamma$. If $I$ has a $(d+1)$-linear resolution over all fields then $\Gamma^{[d]}$ is $d$-chorded.
\end{corollary}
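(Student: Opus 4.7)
The plan is to derive this corollary as an immediate consequence of Theorem \ref{thm:one_dir} part 2. The hypothesis states that $I = \N(\Gamma)$ has a $(d+1)$-linear resolution over \emph{every} field $k$. In particular, this holds for some field of characteristic $2$, say $k = \Z_2$ itself.

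First, I would invoke Theorem \ref{thm:one_dir} part 2 with this specific choice of field. Since $\N(\Gamma)$ has a $(d+1)$-linear resolution over a field of characteristic $2$, the theorem directly concludes that $\Gamma^{[d]}$ is $d$-chorded. That is literally the entire content of the corollary.

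There is essentially no obstacle here: the corollary is just a specialization of the main theorem. The only thing worth remarking on, and which I would state explicitly in the write-up, is that the hypothesis ``over all fields'' is stronger than what is needed for this particular conclusion; it suffices to have a linear resolution over a single field of characteristic $2$. The value of stating the corollary in this form is that it isolates a clean field-independent necessary condition ($d$-chordedness), suitable for comparison with the field-independent condition on orientable cycles from part 1 of Theorem \ref{thm:one_dir}, and for motivating the componentwise-linear generalization Theorem \ref{thm:cwl_intro} proved later. Thus the proof reduces to a one-line application:

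\begin{proof}
Since $I$ has a $(d+1)$-linear resolution over every field, it has one over a field of characteristic $2$. By Theorem \ref{thm:one_dir} part 2, $\Gamma^{[d]}$ is $d$-chorded.
\end{proof}
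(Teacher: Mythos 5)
Your proof is correct and is exactly how the paper treats this statement: the corollary is presented as an immediate specialization of Theorem \ref{thm:one_dir} part 2 to a field of characteristic $2$, with no separate argument given. Your added remark that a single characteristic-$2$ field suffices is accurate and consistent with the theorem's hypotheses.
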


As a consequence of either Theorem \ref{thm:one_dir} or Corollary \ref{cor:one_dir_over_all} we see that the class of $d$-chorded complexes contains the class of $(d+1)$-uniform {\bf chordal clutters} introduced by Woodroofe in \cite{Wood11} and the class of $(d+1)$-uniform {\bf generalized chordal hypergraphs} introduced by Emtander in \cite{Em10} since the hypergraphs in these classes have complements whose edge ideals have linear resolutions over all fields. However, consider the complex in Figure \ref{fig:chorded_not_chordal} which consists of four hollow tetrahedra ``glued together''.  This is a $2$-chorded simplicial complex, which is not chordal in the sense of \cite{Em10} or \cite{Wood11} when considered as a hypergraph, and yet the Stanley-Reisner ideal of the $2$-closure of this complex has a $3$-linear resolution over all fields.  This answers, in the positive, a question posed by Emtander in Section 5 of \cite{Em10} which asks whether or not there exists a hypergraph (or, equivalently, a simplicial complex) such that the Stanley-Reisner ideal of its closure has a linear resolution over every field, but which is not a generalized chordal hypergraph.

\begin{figure}[h!]
{\centering
   \includegraphics[height=1.2in]{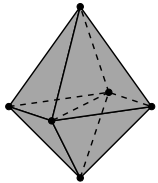}
    \caption {$2$-chorded simplicial complex which is not ``chordal''.} \label{fig:chorded_not_chordal}

}
\end{figure}


\section{The converse: which complexes have a linear resolution?} \label{sec:does_chorded_have_LR}

The converses to parts 1 and 2 of Theorem \ref{thm:one_dir} do not hold. Consider the following counterexample to the converse of part 1.

\begin{example}
The simplicial complex $\Gamma$ in Figure \ref{fig:sphere_tetra} is a triangulated sphere with a hollow tetrahedron suspended within it from four pairwise non-adjacent vertices.  The sphere is not an orientably-vertex-minimal $2$-dimensional cycle as the hollow tetrahedron is an orientable $2$-dimensional cycle on a strict subset of its vertices.  Thus $\Gamma$ is orientably-$2$-cycle-complete as its only orientably-vertex-minimal $2$-dimensional cycle, the tetrahedron, is $2$-complete.  The complex $\Delta_2(\Gamma)$ adds all possible $1$-faces to $\Gamma$ and adds the $3$-face consisting of the four vertices of the tetrahedron.  It is clear that the $2$-faces of the sphere in $\Delta_2(\Gamma)$ form a homological $2$-cycle which is not a $2$-boundary in $\Delta_2(\Gamma)$ for any field $k$ and therefore $\tilde{H}_2(\Delta_2(\Gamma);k) \neq 0$.  Hence $\N(\Delta_2(\Gamma))$ does not have a linear resolution.
\end{example}

Now consider the following counterexample to the converse of Theorem \ref{thm:one_dir} part 2.

\begin{example} \label{ex:chorded_no_LR}
Let $\Gamma$ be the pure $3$-dimensional simplicial complex on vertices $x_0,\ldots,x_6$ that is obtained from $\Lambda_7^3$ by removing the following five facets:
\[
\begin{matrix}
  x_0x_1x_5x_6 & x_0x_2x_5x_6 & x_0x_3x_5x_6, \\
  x_0x_4x_5x_6 & x_1x_2x_3x_4
\end{matrix}
\]

The facets of $\Gamma$ are:
\[
\begin{matrix}
  x_0x_1x_2x_3 & x_0x_1x_2x_4 & x_0x_1x_2x_5 & x_0x_1x_2x_6 & x_0x_1x_3x_4 & x_0x_1x_3x_5 \\
  x_0x_1x_3x_6 & x_0x_1x_4x_5 & x_0x_1x_4x_6 & x_0x_2x_3x_4 & x_0x_2x_3x_5 & x_0x_2x_3x_6 \\
  x_0x_2x_4x_5 & x_0x_2x_4x_6 & x_0x_3x_4x_5 & x_0x_3x_4x_6 & x_1x_2x_3x_5 & x_1x_2x_3x_6 \\
  x_1x_2x_4x_5 & x_1x_2x_4x_6 & x_1x_2x_5x_6 & x_1x_3x_4x_5 & x_1x_3x_4x_6 & x_1x_3x_5x_6 \\
  x_1x_4x_5x_6 & x_2x_3x_4x_5 & x_2x_3x_4x_6 & x_2x_3x_5x_6 & x_2x_4x_5x_6 & x_3x_4x_5x_6
\end{matrix}
\]
The complex $\Gamma$ is $3$-chorded but the Stanley-Reisner ideal of the $4$-dimensional simplicial complex $\Delta_3(\Gamma)$ does not have a linear resolution over $\Z_2$.  In fact, the pure $4$-skeleton of $\Delta_3(\Gamma)$ is a $4$-dimensional cycle with no chord set which is not $4$-complete.  Therefore $\tilde{H}_4(\Delta_3(\Gamma);\Z_2)\neq 0$.
\end{example}

All counter-examples to the converse of Theorem \ref{thm:one_dir} part 2 share a specific property.  The $d$-closures of these $d$-chorded complexes contain face-minimal non-$n$-complete $n$-dimensional cycles having complete $1$-skeletons and having no chord sets, where $n>d$.  It is this feature which prevents the desired linear resolution by introducing homology on a level higher than the dimension of the original complex.  We prove this in \cite{ConFar13} and establish a necessary and sufficient condition for a monomial ideal to have a linear resolution over fields of characteristic $2$.

Although it is not the case that the $d$-closure of all $d$-chorded complexes have Stanley-Reisner ideals with linear resolutions over fields of characteristic $2$, this does hold for the smaller class of $d$-dimensional trees.  As we will see in the proof of the following theorem the $d$-closures of these complexes have no $n$-dimensional cycles for $n\geq d$.  It follows that, over fields having characteristic $2$, all upper-level homologies in the closure are zero.

\begin{thm} \label{thm:tree_has_lin_res}
If $\Gamma$ is a $d$-dimensional tree then $\N(\Delta_d(\Gamma))$ has a $(d+1)$-linear resolution over any field of characteristic $2$.
\end{thm}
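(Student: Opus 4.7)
The plan is to apply Fröberg's homological characterization (Theorem \ref{thm:Frob_hom_char}): it suffices to show that $\mathcal{N}(\Delta_d(\Gamma))$ is minimally generated in degree $d+1$ and that $\tilde{H}_i(\Delta_d(\Gamma)_W; k) = 0$ for every $W \subseteq V(\Gamma)$ and every $i \neq d-1$, whenever $\mathrm{char}(k) = 2$.

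The first key step is the structural observation that when $\Gamma$ is a $d$-dimensional tree, $\Delta_d(\Gamma)$ contains no face of dimension greater than $d$. Indeed, any face $F \in \Delta_d(\Gamma)$ with $|F| \geq d+2$ would, by the definition of $d$-closure, force every $(d+1)$-subset of $F$ to be a $d$-face of $\Gamma$; then fixing any $(d+2)$-subset of $F$ produces a copy of $\Lambda_{d+2}^d$ in $\Gamma$, which by Proposition \ref{prop:smallest_cycle} is a $d$-dimensional cycle, contradicting the tree hypothesis. Thus $\Delta_d(\Gamma)$ is exactly $\Gamma$ together with all subsets of $V(\Gamma)$ of size at most $d$. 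In particular $\Delta_d(\Gamma)^{[d]} = \Gamma$, so Proposition \ref{prop:min_gen} yields that $\mathcal{N}(\Delta_d(\Gamma))$ is minimally generated in degree $d+1$.

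To handle the induced subcomplexes I combine this observation with Lemma \ref{lem:delta_operation_induced}, which gives $\Delta_d(\Gamma)_W = \Delta_d((\Gamma_W)^{[d]})$. Since any $d$-dimensional cycle in $(\Gamma_W)^{[d]}$ would also be one in $\Gamma$, the same structural argument applies: $\Delta_d(\Gamma)_W$ has dimension at most $d$, its $d$-faces are the $d$-faces of $\Gamma$ contained in $W$, it contains every subset of $W$ of size at most $d$, and it has no $d$-dimensional cycle. This disposes of each dimension separately. For $i \geq d+1$ the dimension bound gives $\tilde{H}_i = 0$. For $i \leq d-2$, $\Delta_d(\Gamma)_W$ has the same $(d-1)$-skeleton as the full simplex on $W$, so the chain complex through dimension $d-1$ agrees with that of a simplex, forcing $\tilde{H}_i = 0$. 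For $i = d$, the Universal Coefficient Theorem (\cite[Theorem 3A.3]{Hatch02}) reduces to $k = \mathbb{Z}_2$, whereupon Theorem \ref{thm:cycle_hom_z2} gives $\tilde{H}_d(\Delta_d(\Gamma)_W; \mathbb{Z}_2) = 0$ because $\Delta_d(\Gamma)_W$ contains no $d$-dimensional cycle.

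The conceptual core of the proof is the first structural step: the tree hypothesis on $\Gamma$ prevents $\Delta_d(\Gamma)$ from acquiring any face of dimension greater than $d$, and hence any cycle of dimension greater than $d$. After that, the remaining homology computations are routine applications of the machinery assembled in Sections \ref{sec:homology_ddim_cycles} and \ref{sec:F_orig_thm}. The characteristic $2$ hypothesis enters exactly once, through Theorem \ref{thm:cycle_hom_z2}, which is what lets us conclude vanishing of $\tilde{H}_d$ from the absence of $d$-dimensional cycles without needing any orientability assumption.
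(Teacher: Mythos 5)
Your proposal is correct and follows essentially the same route as the paper's proof: the key structural step (a face of dimension $>d$ in $\Delta_d(\Gamma)$ would force a copy of $\Lambda_{d+2}^d$ in $\Gamma$, contradicting the tree hypothesis), the reduction of induced subcomplexes via Lemma \ref{lem:delta_operation_induced} and the fact that skeletons of induced subcomplexes of $d$-dimensional trees are again $d$-dimensional trees, and the use of Theorem \ref{thm:cycle_hom_z2} for the $i=d$ case all match. The only differences are cosmetic: your separate verification of minimal generation via Proposition \ref{prop:min_gen} and the Universal Coefficient Theorem reduction to $\mathbb{Z}_2$ are both unnecessary, since Theorem \ref{thm:Frob_hom_char} is an equivalence and Theorem \ref{thm:cycle_hom_z2} already applies to any field of characteristic $2$.
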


\begin{proof}
By Theorem \ref{thm:Frob_hom_char} we need to show that, for any field $k$ of characteristic $2$, $\tilde{H}_i(\Delta_d(\Gamma)_W;k) = 0$ for all $i \neq d-1$ and all $W \subseteq V(\Gamma)$.  However, it is not hard to see that the pure $d$-skeleton of any induced subcomplex of a $d$-dimensional tree is also a $d$-dimensional tree and so by Lemma \ref{lem:delta_operation_induced} we need only show that $\tilde{H}_i(\Delta_d(\Gamma);k)=0$ for all $i \neq d-1$.

Since $\Delta_d(\Gamma)$ has all possible faces of dimension less than $d$, by its definition, we know that $\tilde{H}_i(\Delta_d(\Gamma);k) = 0$ for all $i < d-1$.  Since $\Gamma$ has no $d$-dimensional cycles, neither does $\Delta_d(\Gamma)$ and so by Theorem \ref{thm:cycle_hom_z2} we must have $\tilde{H}_d(\Delta_d(\Gamma);k) = 0$.

We claim that $\Delta_d(\Gamma)$ has no faces of dimension greater than $d$.  If $\Delta_d(\Gamma)$ contains a face of dimension greater than $d$ then it must contain a face of dimension $d+1$.  Such a face exists in $\Delta_d(\Gamma)$ only when all subsets of its vertices of size $d+1$ are faces of $\Gamma$.  But these $d$-faces of $\Gamma$ then form a $d$-dimensional cycle in $\Gamma$ by Proposition \ref{prop:smallest_cycle}.  This is a contradiction since $\Gamma$ contains no $d$-dimensional cycles and so $\Delta_d(\Gamma)$ contains no faces of dimension greater than $d$.  Hence it must be the case that $\tilde{H}_i(\Delta_d(\Gamma);k) = 0$ for all $i > d$.

Therefore $\tilde{H}_i(\Delta_d(\Gamma);k) = 0$ for all $i \neq d-1$ and so, by Theorem \ref{thm:Frob_hom_char}, $\N(\Delta_d(\Gamma))$ has a $(d+1)$-linear resolution over $k$.
\end{proof}

Theorem \ref{thm:tree_has_lin_res} is equivalent to Theorem \ref{thm:trees_intro} given in Section \ref{sec:intro} since the facet ideal of the $d$-complement of a simplicial complex $\Gamma$ is the Stanley-Reisner ideal of the $d$-closure of $\Gamma$.


\section{Chorded complexes and componentwise linear ideals} \label{sec:chorded}
For square-free monomial ideals whose generators are not all of the same degree, the property of being {\bf componentwise linear} is analogous to having a linear resolution.

\begin{definition}[{\bf componentwise linear}]
A square-free monomial ideal $I$ is {\bf componentwise linear} over the field $k$ if $I_{[d]}$ has a linear resolution over $k$ for all $d$, where $I_{[d]}$ is the ideal generated by the square-free monomials in $I$ of degree $d$.
\end{definition}

The Stanley-Reisner complex of such an ideal will not be the closure of a pure simplicial complex.  However, we may still observe some combinatorial properties of this non-pure complex itself.  We introduce the notion of a {\bf chorded} complex to restrict cycles on all dimensions of the simplicial complex.

\begin{definition}[{\bf chorded}]
A simplicial complex $\Gamma$ is {\bf chorded} if $\Gamma^{[d]}$ is $d$-chorded for all $d \leq \dim \Gamma$.
\end{definition}

Before showing that such complexes result from componentwise linear ideals, we require the following lemma.

\begin{lem} \label{lem:equiv_of_skels}
Given a simplicial complex $\Gamma$ we have
\[
    \Gamma^{[d-1]} = \N(\N(\Gamma)_{[d]})^{[d-1]}.
\]
\end{lem}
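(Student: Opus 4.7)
The plan is to prove this by a direct unpacking of definitions, with no substantive obstacle. The content is a formal identity between two complexes built from the same data.

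First, I would identify the full face set of $\N(\N(\Gamma)_{[d]})$. By definition, $\N(\Gamma)_{[d]}$ is generated by the monomials $x^G$ with $G \notin \Gamma$ and $|G|=d$. A subset $F \subseteq V(\Gamma)$ is therefore a face of the Stanley--Reisner complex $\N(\N(\Gamma)_{[d]})$ if and only if $x^F$ is not divisible by any such generator, i.e., if and only if every $d$-subset of $F$ is a face of $\Gamma$.

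Next, I would pass to the pure $(d-1)$-skeleton, whose facets are precisely the $(d-1)$-dimensional faces of $\N(\N(\Gamma)_{[d]})$, i.e., the subsets of $V(\Gamma)$ of size $d$ that satisfy the condition just derived. But a set $F$ with $|F|=d$ has only one $d$-subset, namely $F$ itself. So $F$ belongs to $\N(\N(\Gamma)_{[d]})^{[d-1]}$ if and only if $F \in \Gamma$, which is exactly the condition for $F$ to be a facet of $\Gamma^{[d-1]}$. Since both sides are pure complexes of dimension $d-1$ determined by their facet sets, and the facet sets coincide, the two complexes are equal.

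Finally, I would dispatch the minor bookkeeping that the two complexes share the same vertex set $V(\Gamma)$. For $d \geq 2$ the ideal $\N(\Gamma)_{[d]}$ contains no variables, so every singleton of $V(\Gamma)$ is a face of $\N(\N(\Gamma)_{[d]})$; for $d=1$ the ideal is zero and the statement reduces to the tautology $\Gamma^{[0]} = V(\Gamma) = \N(0)^{[0]}$. There is no substantive obstacle here; the lemma is purely a translation between the Stanley--Reisner and facet perspectives, and will be used as a bridge in the proof of Theorem~\ref{thm:cwl_intro}.
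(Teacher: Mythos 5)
Your proof is correct and follows essentially the same route as the paper's: both arguments just unwind the definitions to show that a set $F$ with $|F|=d$ satisfies $x^F \notin \N(\Gamma)_{[d]}$ if and only if $F \in \Gamma$, so the facet sets of the two pure $(d-1)$-dimensional complexes coincide. Your version is if anything slightly more careful, since you also note the agreement of vertex sets and the degenerate case $d=1$, which the paper passes over silently.
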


\begin{proof}
Let $F \in \Gamma^{[d-1]}$.  Then $x^F \notin \N(\Gamma)$.  Hence $x^F \notin \N(\Gamma)_{[d]}$.  Therefore $F \in \N(\N(\Gamma)_{[d]})$, the Stanley-Reisner complex of $\N(\Gamma)_{[d]}$, and $F \in \N(\N(\Gamma)_{[d]})^{[d-1]}$ because $|F|=d$.

Conversely, let $F \in \N(\N(\Gamma)_{[d]})^{[d-1]}$.  Then $F \in \N(\N(\Gamma)_{[d]})$ and so $x^F \notin \N(\Gamma)_{[d]}$.  Since $|F|=d$, this means that $x^F \notin \N(\Gamma)$.  Therefore $F \in \Gamma$ and so $F \in \Gamma^{[d-1]}$.
\end{proof}

\begin{thm} \label{thm:cwl_is_chorded}
If $\N(\Gamma)$ is componentwise linear over every field $k$ then $\Gamma$ is chorded.
\end{thm}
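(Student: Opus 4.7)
The plan is to reduce the theorem to Corollary \ref{cor:one_dir_over_all} one skeleton at a time, using the lemma on equivalence of skeleta (Lemma \ref{lem:equiv_of_skels}) as the bridge between the ambient complex $\Gamma$ and the Stanley–Reisner complex of each component $\N(\Gamma)_{[d]}$. Since componentwise linearity is a statement about each graded component of the ideal in isolation, the natural strategy is to prove $\Gamma^{[e]}$ is $e$-chorded separately for each $e \le \dim\Gamma$.

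Fix $e$ with $0 \le e \le \dim \Gamma$. The first step is to consider the ideal $I = \N(\Gamma)_{[e+1]}$, which by assumption has an $(e+1)$-linear resolution over every field $k$ (this is exactly the definition of componentwise linearity applied in degree $e+1$; note that if the degree-$(e+1)$ part of $\N(\Gamma)$ is empty, so that $I=0$, then $\N(I)$ is a simplex and its skeleton is $e$-chorded by Remark \ref{rem:simplex_is_chorded}, so that degenerate case is harmless).

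The second step is to apply Corollary \ref{cor:one_dir_over_all} to $I$ with the parameter $d$ set to $e$. The Stanley–Reisner complex of $I$ is $\N(I) = \N(\N(\Gamma)_{[e+1]})$, and the corollary yields that $\N(\N(\Gamma)_{[e+1]})^{[e]}$ is $e$-chorded.

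The third and final step is to identify this skeleton with $\Gamma^{[e]}$. Applying Lemma \ref{lem:equiv_of_skels} with its parameter $d$ set to $e+1$ gives
\[
    \Gamma^{[e]} \;=\; \N(\N(\Gamma)_{[e+1]})^{[e]},
\]
so the complex whose chordedness we just established is precisely $\Gamma^{[e]}$. Since $e$ was arbitrary with $e \le \dim \Gamma$, every pure skeleton $\Gamma^{[e]}$ is $e$-chorded, which is by definition the statement that $\Gamma$ is chorded. There is essentially no obstacle here: all the hard work is packaged inside Theorem \ref{thm:one_dir} and its Corollary \ref{cor:one_dir_over_all}, and the only genuine thing to verify is that the hypothesis \emph{over every field $k$} feeds correctly into the hypothesis of the corollary, which it does verbatim.
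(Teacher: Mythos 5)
Your proof is correct and follows essentially the same route as the paper's: apply Corollary \ref{cor:one_dir_over_all} to each graded component $\N(\Gamma)_{[e+1]}$ and use Lemma \ref{lem:equiv_of_skels} to identify $\N(\N(\Gamma)_{[e+1]})^{[e]}$ with $\Gamma^{[e]}$. Your explicit handling of the degenerate case $\N(\Gamma)_{[e+1]}=0$ is a small addition the paper leaves implicit, but the argument is otherwise identical up to an index shift.
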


\begin{proof}
Since $\N(\Gamma)_{[d]}$ has a linear resolution over all fields $k$, we have that for all $d$,
\[
    \N(\N(\Gamma)_{[d]}) = \Delta_{d-1}\left(\N(\N(\Gamma)_{[d]})^{[d-1]}\right)
\]
and $\N(\N(\Gamma)_{[d]})^{[d-1]}$ is $(d-1)$-chorded by Corollary \ref{cor:one_dir_over_all}.  Hence by Lemma \ref{lem:equiv_of_skels} we know that $\Gamma^{[d-1]}$ is $(d-1)$-chorded for all $d\leq \dim \Gamma +1$.  Hence $\Gamma$ is chorded.
\end{proof}

\bibliography{chorded_paper_bib}

\end{document}